\newtheorem{X}{X}[section]
\newtheorem{corollary}[X]{Corollary}
\newtheorem{lemma}[X]{Lemma}
\newtheorem{proposition}[X]{Proposition}
\newtheorem{theorem}[X]{Theorem}
\theoremstyle{definition}
\newtheorem{remark}[X]{Remark}
\title{On a theorem of Bombieri, Friedlander and Iwaniec}
\author{Daniel Fiorilli}
\address{D\'epartement de math\'ematiques et de statistique \\ Universit\'e de Montr\'eal \\ CP 6128, succ.\ Centre-ville \\ Montr\'eal, QC \\ Canada H3C 3J7}
\email{fiorilli@dms.umontreal.ca}
\begin{document}

\begin{abstract}
 In this article, we show to which extent one can improve a theorem of Bombieri, Friedlander and Iwaniec by using Hooley's variant of the divisor switching technique. We also give an application of the theorem in question, which is a Bombieri-Vinogradov type theorem for the Tichmarsh divisor problem in arithmetic progressions.
\end{abstract}
\maketitle

\section{Introduction}

The Bombieri-Vinogradov theorem implies that on average over $q\leq x^{1/2-o(1)}$, the primes less than $x$ are equidistributed in the residue classes $a \bmod q$, with $(a,q)=1$. Specifically, we have for any $A>0$ that
\begin{equation} \label{BV} \sum_{q\leq Q } \max_{a:(a,q)=1} \left| \psi(x;q,a)-\frac x{\phi(q)} \right|  \ll \frac{x}{(\log x)^{A}}, \end{equation}
where $Q=x^{1/2}/(\log x)^{A+5}$. One could ask if \eqref{BV} still holds if we take $Q = x^{\theta}$, with $\theta >\frac 12$. This would be a major achievement, since it would imply bounded gaps between primes \cite{GPY}, that is 
$$ \liminf_{n} (p_{n+1}-p_n) < \infty.$$
The Elliot-Halberstam conjecture stipulates that we can take $\theta$ to be any real number less than $1$. This conjecture is however very far from reach.

One way to get past the barrier of $Q=x^{1/2-o(1)}$ is to relax the condition on $a$. Indeed, in concrete problems, one often only needs the bound \eqref{BV} for a fixed value of $a$. Sometimes, even the absolute values are not necessary. These variants were studied very closely in a series of groundbreaking articles by Fouvry \& Iwaniec (\cite{fouvry iwaniec BV}, \cite{fouvry iwaniec PAP}), Fouvry (\cite{fouvry BV}, \cite{fouvry BV2}, \cite{fouvry titchmarsh}), and Bombieri, Friedlander \& Iwaniec (\cite{BFI}, \cite{BFI2}, \cite{BFI3}). We will list the results of these authors by increasing order of uniformity.

By fixing $a$, one can go up to $Q=x^{\frac 12+ \frac 1{(\log\log x)^B}}$.
\begin{theorem}[Bombieri, Friedlander and Iwaniec \cite{BFI2}]
Let $a\neq 0$, $x\geq y\geq 3$, and $Q^2\leq xy$. We then have

$$\sum_{\substack{Q\leq q<2Q \\ (q,a)=1}} \left| \psi(x;q,a)-\frac x{\phi(q)} \right|  \ll x \left( \frac{\log y}{\log x}\right)^2 (\log\log x)^B. $$

\end{theorem}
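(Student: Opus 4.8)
The plan is to reduce $\psi(x;q,a)-x/\phi(q)$ to bilinear (``Type II'') forms by a combinatorial identity whose cut-off parameter is tuned to $y$, to dispose of the resulting ``Type I'' pieces by elementary counting sharpened with the divisor-switching trick, and to treat the Type II forms by Linnik's dispersion method in the form exploited by Bombieri, Friedlander and Iwaniec, the crucial arithmetic input being Weil's bound for the Kloosterman sums that arise.

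First I would pass to $\sum_{Q\le q<2Q,\,(q,a)=1}\varepsilon_q\bigl(\psi(x;q,a)-x/\phi(q)\bigr)$ for a suitable sign pattern $\varepsilon_q\in\{\pm1\}$, which must then be carried through the whole argument, and apply Vaughan's identity (or Heath--Brown's identity) to $\Lambda$ on $(1,x]$ with the parameter taken equal to $y$. This writes $\Lambda=a_1-a_2+a_3+a_4$, where $a_1$ is supported on $n\le y$; where $a_2$ and $a_3$ are of Type I, i.e.\ convolutions $c\ast 1$ with $c$ supported on integers $\le y^2$, so that one factor runs freely over a long interval; and where $a_4(n)=\sum_{mk=n}\Lambda(m)\mathbf{1}_{m>y}\,\beta_k$ with $\beta_k=-\sum_{f\mid k,\,f\le y}\mu(f)$. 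The essential point of taking the parameter to be $y$ is that $\beta_k$ vanishes for $1<k\le y$, so in $a_4$ both variables lie in $(y,x/y]$; this is precisely the configuration for which the dispersion method permits $Q$ as large as $(xy)^{1/2}$.

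The contribution of $a_1$ is harmless: in the range that matters one has $y<Q$, so a modulus $q\sim Q$ can divide $n-a$ with $n\le y$ only when $n=a$, and the total is $O\bigl(Q\log|a|+y\bigr)=O\bigl((xy)^{1/2}\bigr)$, which is within the claimed bound. For the Type I terms $a_2$ and $a_3$, the inner congruence $dm\equiv a\pmod q$ with $dm\le x$ is rewritten by Hooley's variant of the divisor-switching identity: setting $dm=a+\ell q$ turns it into a congruence $\ell q\equiv-a\pmod d$ on the complementary variable $\ell\ll x/Q$, and after reassembling the divisor sum and estimating the resulting short tail one is left with a Bombieri--Vinogradov bound whose effective modulus is the small quantity $d$ rather than $q$; this is acceptable. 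Pushing this reduction as far as it will go is the refinement carried out in the present paper.

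The core of the matter is the Type II sum $S=\sum_{q\sim Q}\varepsilon_q\sum_{m\sim M}\sum_{k\sim K}\alpha_m\beta_k\bigl(\mathbf{1}_{mk\equiv a\,(q)}-\mathbf{1}_{(mk,q)=1}/\phi(q)\bigr)$ with $MK\asymp x$ and $y<M,K\le x/y$. Here I would apply Cauchy--Schwarz in the variable of shorter length, eliminating $(\varepsilon_q)$ and the coefficients of the longer variable and bounding $S$ in terms of the square root of a dispersion sum, that is, of a quadratic form in the remaining coefficients into which the congruences modulo $q$ have been opened. Performing the summation over $q\sim Q$ produces incomplete Kloosterman sums; since $a$ is fixed and coprime to $q$, these may be completed and estimated by Weil's bound, and it is exactly this input --- which is not available when $a$ is allowed to range over all residue classes --- that makes $Q$ up to $(xM)^{1/2}=(xy)^{1/2}$ admissible. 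The diagonal terms and the several main terms produced along the way must be evaluated, not merely bounded; it is from this evaluation, together with the sieve estimates needed to control $\sum|\alpha_m|$ and $\sum|\beta_k|$ --- whose supports consist of integers all of whose small divisors exceed $y$ --- that the logarithmic saving $(\log y/\log x)^2(\log\log x)^B$ ultimately comes. I expect this last stage to be the principal obstacle: arranging the Cauchy--Schwarz so that no essential power of $\log y/\log x$ is lost, keeping the error terms from the completion of the Kloosterman sums under control when $Q$ lies just beyond $x^{1/2}$, and tracking the exact powers of $\log$ and $\log\log x$ through the dispersion argument.
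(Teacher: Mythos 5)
This statement is quoted in the paper as background from \cite{BFI2}; the paper gives no proof of it, so your outline can only be judged against the actual argument of Bombieri, Friedlander and Iwaniec. Measured that way, what you have written is a reasonable description of the general strategy of that circle of papers (combinatorial identity, Type I by divisor switching, Type II by Cauchy--Schwarz and the dispersion method with Kloosterman-sum input), but it is not a proof: the steps you defer --- ``arranging the Cauchy--Schwarz so that no essential power of $\log y/\log x$ is lost'', controlling the completed Kloosterman sums when $Q$ exceeds $x^{1/2}$, and extracting the final saving --- are not technical bookkeeping; they are the entire content of the theorem, and nothing in your sketch shows they can be carried out. In particular, it is not true that a single application of Cauchy--Schwarz followed by Weil's bound makes all bilinear ranges with both variables in $(y,x/y]$ admissible up to $Q=(xy)^{1/2}$: the dispersion estimates degenerate when the two variables are badly unbalanced or when one of them sits near $Q$, and the published proof survives precisely by a much finer decomposition (Heath--Brown's identity with several parameters rather than Vaughan's with the single cut $y$), by classifying the resulting ranges, and by estimating the intractable ranges nontrivially through bounds for the number of integers having a divisor in a prescribed short interval; that is also where the factor $(\log\log x)^{B}$ comes from.

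Two further points in your sketch are off the mark. First, the source of the saving $(\log y/\log x)^{2}$ is not ``sieve estimates for $\sum|\alpha_m|$ and $\sum|\beta_k|$ whose supports consist of integers all of whose small divisors exceed $y$'': with the Vaughan parameter equal to $y$ the coefficient $\beta_k=-\sum_{f\mid k,\,f\le y}\mu(f)$ is supported on $k=1$ and $k>y$ but is in no useful sense supported on $y$-rough integers, and the $(\log y/\log x)^{2}$ in \cite{BFI2} arises instead from the measure of the exceptional bilinear configurations that must be bounded rather than evaluated. Second, keeping the sign pattern $\varepsilon_q$ (needed to convert the absolute values) through the dispersion argument is exactly what forces the fixed residue $a$ and the dyadic range of $q$, and your sketch does not explain how the main terms produced by the dispersion cancel in the presence of arbitrary signs. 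So the proposal identifies the right toolbox but leaves the genuinely difficult estimates unproved; as it stands it does not establish the stated bound.
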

The best known result was obtained shortly afterwards by the same authors, and shows that one can go up to $Q=x^{\frac 12+o(1)}$, whatever the nature of the $o(1)$ is.
\begin{theorem}[Bombieri, Friedlander, Iwaniec \cite{BFI3}]
Let $a\neq 0$ be an integer and $A>0$, $2\leq Q\leq x^{3/4}$ be reals. Let $\mathcal Q$ be the set of all integers $q$, prime to $a$, from an interval $Q'<q\leq Q$. Then
\begin{multline*} \sum_{q\in \mathcal Q} \left| \pi(x;q,a)-\frac {\pi(x)}{\phi(q)} \right|  \\
\leq \left\{K\left(\theta-\frac 12\right)^{2}\frac x{\log x} +O_A\left( \frac{x (\log\log x)^2}{(\log x)^3}\right)\right \} \sum_{q\in \mathcal Q}\frac 1{\phi(q)} 
+ O_{a,A}\left( \frac x{(\log x)^A} \right),
\end{multline*}
where $\theta:=\frac{\log Q}{\log x}$ and $K$ is absolute.
\end{theorem}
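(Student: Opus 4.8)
\emph{Proof proposal.} The plan is to run Linnik's dispersion method in the form developed by Bombieri, Friedlander and Iwaniec, preceded by a combinatorial identity for the primes. By a routine partial summation it is enough to bound $\sum_{q\in\mathcal Q}\bigl|\psi(t;q,a)-\psi(t)/\phi(q)\bigr|$, uniformly for $2\le t\le x$, by $\{K(\theta-\tfrac12)^2 t+O_A(\cdots)\}\sum_{q\in\mathcal Q}\tfrac1{\phi(q)}+O_{a,A}(\cdots)$: the passage back to $\pi$ costs only the factor $\log x$ together with a harmless adjustment of the constants, the prime-power terms and the integral being of lower order. Then apply a combinatorial identity --- Vaughan's identity, or Heath-Brown's identity of level $k$ with $x^{1/k}$ below the Type I threshold --- to write $\Lambda(n)\mathbf{1}_{n\le x}$ as a linear combination of $O_k((\log x)^{k})$ Dirichlet convolutions, so that $\psi(t;q,a)-\psi(t)/\phi(q)$ becomes a sum of $O((\log x)^{O(1)})$ pieces, each of them either a ``Type I'' sum $\sum_{d}\gamma_d\bigl(\#\{m:\,dm\le t,\ dm\equiv a\,(q)\}-\text{main term}\bigr)$ with $d$ confined to a short interval, or a ``Type II'' bilinear sum $\sum_{M<m\le 2M}\sum_{N<n\le 2N}\alpha_m\beta_n$ with $MN\asymp t$ and $M,N$ in a medium range.

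For the Type I sums I would perform the sum over $q\in\mathcal Q$ before that over $m$: for fixed $d$ the inner error is, once the main term is removed, an expression in the sawtooth function at $t/(dq)$, and the sum of such terms over $q\in\mathcal Q$ enjoys cancellation. For the range of $d$ that actually occurs one may estimate these sums elementarily (via the Erd\H os--Tur\'an inequality, or Vaaler's trigonometric polynomials), or --- in order to reach moduli as large as $x^{3/4}$ --- complete the relevant sum and invoke Weil's bound for the Kloosterman and Ramanujan sums that appear; either way the total Type I contribution is $O_{a,A}(x/(\log x)^A)$, which is absorbed into the error term and is where the dependence on $a$ resides.

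The Type II sums are the heart of the matter and, I expect, the main obstacle. One inserts unimodular weights $\xi_q$ to delete the absolute value, applies Cauchy--Schwarz in whichever of $m,n$ has the shorter range, and expands the square, thereby reducing matters to counting weighted pairs $(n_1,n_2)$ with $n_1\equiv n_2\pmod q$ or $n_1\bar n_2\equiv 1\pmod q$, summed over $q\in\mathcal Q$. The diagonal $n_1=n_2$, together with the cross term from the subtracted main term, reconstructs the expected value; the off-diagonal is opened into complete additive character sums modulo $q$, producing Kloosterman sums $S(\cdot,\cdot;q)$ which, estimated by Weil and summed over the \emph{interval} $\mathcal Q$ of moduli, yield a genuine power saving --- but only when the shorter of $M,N$ exceeds roughly $x^{\theta}$, equivalently when the longer one stays below $x^{1-\theta}$. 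Because $\theta>\tfrac12$, this leaves a critical region of bilinear forms uncovered --- those in which two factors of the combinatorial decomposition each range freely over an interval of logarithmic width $\asymp(\theta-\tfrac12)\log x$, so that the region has logarithmic measure $\asymp(\theta-\tfrac12)^2(\log x)^2$ --- and on which the dispersion argument retains only the diagonal contribution. Bounding each of the $\asymp\bigl((\theta-\tfrac12)\log x\bigr)^2$ dyadic pieces in this region by $O\bigl(\tfrac{x}{(\log x)^{3}}\sum_{q\in\mathcal Q}\tfrac1{\phi(q)}\bigr)$, via the trivial count on the diagonal together with the Brun--Titchmarsh inequality for the coefficient sums, and summing up, produces the main term $K(\theta-\tfrac12)^2\tfrac{x}{\log x}\sum_{q\in\mathcal Q}\tfrac1{\phi(q)}$ with $K$ absolute; the loss of $(\log\log x)^2$ originates from the transitional scales at the edge of the region, where Brun--Titchmarsh is applied with a small modulus. (The $(\theta-\tfrac12)^2$ shape is already visible in the first theorem quoted above on optimising $y$, but obtaining it per unit of $\sum_{q}1/\phi(q)$, and with only a $(\log\log x)^2$ loss, requires the full argument.) Collecting everything --- the $O_{a,A}(x/(\log x)^A)$ from Type I and from the off-diagonals of the admissible Type II forms, plus the contribution of the critical region --- gives the statement.

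The genuine difficulty is thus concentrated in the Type II step: controlling every error term in the dispersion method \emph{uniformly} in $q$, so that Cauchy--Schwarz costs no more than a fixed power of $\log x$; extracting the main term as an equality rather than an inequality; and --- to secure the explicit constant $K(\theta-\tfrac12)^2$ rather than an unspecified $O((\theta-\tfrac12)^2)$ --- identifying precisely which bilinear scales are unavoidable and estimating their joint contribution sharply enough that what survives is only $O_A\bigl(x(\log\log x)^2/(\log x)^3\bigr)$ per unit of $\sum_{q\in\mathcal Q}1/\phi(q)$.
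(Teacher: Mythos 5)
This statement is not proved in the paper at all: it is quoted verbatim from Bombieri--Friedlander--Iwaniec (\emph{Primes in arithmetic progressions to large moduli.\ III}), so the only meaningful comparison is with that original proof, and against it your text is a strategy outline rather than a proof. You have correctly identified the general architecture (combinatorial decomposition of $\Lambda$, Type I/Type II dichotomy, dispersion plus Cauchy--Schwarz, a critical bilinear region of logarithmic measure $\asymp(\theta-\tfrac12)^2(\log x)^2$ responsible for the main term), but every step that carries the actual content of the theorem is deferred or asserted. Concretely: (1) the claim that the off-diagonal terms, after opening into Kloosterman sums, give a power saving ``by Weil, summed over the interval $\mathcal Q$'' is not substantiated and, as stated, is not enough. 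Applying Weil individually to each modulus and summing trivially over $q\in\mathcal Q$ does not beat the $x^{1/2}$ barrier; the engine of the BFI results in the range $Q$ up to $x^{3/4}$ is cancellation in the sum over the moduli themselves, i.e.\ the Deshouillers--Iwaniec estimates for averaged sums of incomplete Kloosterman sums coming from the spectral theory of automorphic forms (together with the Fouvry--Iwaniec style manipulations), and nothing in your sketch supplies a substitute. The same issue affects your Type I discussion: pushing those terms to $O_{a,A}(x/(\log x)^A)$ for $q$ as large as $x^{3/4}$ again requires exploiting the average over $q$, not just Erd\H{o}s--Tur\'an or a single completion plus Weil.

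(2) The main term is the theorem, and you do not derive it. Saying that each of the $\asymp((\theta-\tfrac12)\log x)^2$ uncovered dyadic scales contributes $O\bigl(\tfrac{x}{(\log x)^3}\sum_{q\in\mathcal Q}\tfrac1{\phi(q)}\bigr)$ ``via the trivial diagonal count and Brun--Titchmarsh'' yields, at best, an unspecified constant and typically extra logarithmic losses; obtaining an \emph{absolute} $K$, per unit of $\sum_{q\in\mathcal Q}1/\phi(q)$ uniformly in the unspecified endpoint $Q'$, with only a $(\log\log x)^2$ loss in the secondary term, requires the precise bookkeeping of the critical region and the uniform-in-$q$ control of all dispersion error terms that occupies the original paper (building on parts I and II). As it stands your proposal names the right difficulties --- indeed it explicitly concedes that the Type II step and the extraction of $K(\theta-\tfrac12)^2$ are ``the genuine difficulty'' --- but it does not resolve them, so it cannot be accepted as a proof of the stated theorem.
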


Replacing the absolute values by a certain weight (see \cite{BFI} for the definition of "well factorable"), we can take $Q=x^{4/7-\epsilon}$.
\begin{theorem}[Bombieri, Friedlander and Iwaniec \cite{BFI}]
\label{BFI 4/7}
Let $a\neq 0$, $\epsilon>0$ and $Q=x^{4/7-\epsilon}$. For any well factorable function $\lambda(q)$ of level $Q$ and any $A>0$ we have 
\begin{equation}\sum_{\substack{(q,a)=1}} \lambda(q) \left( \psi(x;q,a)-\frac x{\phi(q)} \right)  \ll \frac x{(\log x)^A}. \label{BFI 4/7 equation} \end{equation}
\end{theorem}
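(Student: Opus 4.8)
The plan is to follow the standard route for pushing prime equidistribution past the $x^{1/2}$ barrier: decompose $\Lambda$ combinatorially into bilinear pieces, dispose of the ``Type I'' pieces (one essentially smooth variable) by elementary and large-sieve arguments, and attack the ``Type II'' pieces (genuine bilinear forms) by Linnik's dispersion method, which after Poisson summation reduces them to bounds for bilinear forms in Kloosterman sums.

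Concretely, I would first apply an arithmetic identity --- Vaughan's identity, or Heath-Brown's identity truncated near $x^{1/3}$ --- to write $\Lambda(n)$ for $n\le x$ as a bounded number of Dirichlet convolutions $\sum_{m\sim M}\sum_{n\sim N}\alpha_m\beta_n$ with $MN\asymp x$, where either some coefficient sequence is constant on a dyadic block with $M\le x^{1/3}$ (Type I), or $\alpha,\beta$ are divisor-bounded and $x^{1/3}\le M,N\le x^{2/3}$ (Type II). Substituting this into $\psi(x;q,a)-x/\phi(q)$ and into \eqref{BFI 4/7 equation}, and detecting the congruence $mn\equiv a\ (q)$, it suffices to prove, for each bilinear shape and each dyadic $Q\le x^{4/7-\epsilon}$,
\[ \sum_{(q,a)=1}\lambda(q)\Big(\sum_{\substack{m\sim M,\ n\sim N\\ mn\equiv a\ (q)}}\alpha_m\beta_n\ -\ \frac{1}{\phi(q)}\sum_{\substack{m\sim M,\ n\sim N\\ (mn,q)=1}}\alpha_m\beta_n\Big)\ \ll\ \frac{x}{(\log x)^{A+O(1)}}. \]
For the Type I shapes the congruence pins $m$ and leaves $n$ running over a full interval, so the inner sum evaluates directly; the main term cancels and what survives is a Bombieri-Vinogradov bound for the coefficients $\alpha_m$, which the large sieve delivers well past level $x^{4/7}$. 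Here Hooley's divisor-switching variant, the subject of the present paper, is the natural device to invoke whenever the $n$-interval is too short for a clean evaluation, trading it for a complementary, longer divisor sum.

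The heart of the matter is the Type II shapes, where the well-factorability of $\lambda$ is used decisively. Given the factorization $Q=RS$ dictated by the final optimization, I would write $\lambda=\lambda_1*\lambda_2$ with $\lambda_i\ll 1$ supported up to $R$ (respectively $S$), so that $q=rs$ with $r\sim R$ and $s\sim S$ at one's disposal. Detaching $r$, applying Cauchy-Schwarz to eliminate one coefficient sequence, and opening the square is Linnik's dispersion method: the diagonal $n_1=n_2$ supplies a main term that, after summation over the complementary parameters, matches the expected $x/\phi(q)$, while the off-diagonal $n_1\ne n_2$, after Poisson summation in the $m$-variable, becomes a sum of incomplete Kloosterman sums to moduli dividing $rn_1n_2s$, weighted bilinearly by $\lambda_1$ and $\lambda_2$. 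Estimating such bilinear forms in Kloosterman sums is exactly the province of the Deshouillers-Iwaniec bounds coming from the Kuznetsov trace formula and the spectral theory of automorphic forms on $\Gamma_0(q)$; inserting these and optimizing $R,S,M,N$ subject to $RS=Q$ and $MN\asymp x$ produces precisely the admissible level $Q=x^{4/7-\epsilon}$.

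I expect the Type II step to be the main obstacle, and it is the reason the exponent is $4/7$ rather than larger: the flexibility gained from the factorization $q=rs$ must be matched against the narrow ranges in which the Kloosterman-sum estimates beat the trivial bound, and the resulting optimization is tight. The remaining difficulties are technical but genuine --- removing the divisor-bounded coefficients and the coprimality conditions by M\"obius inversion without losing powers of $\log x$, controlling the Poisson error terms and the tails of the incomplete sums, and arranging the dispersion bookkeeping so that the spurious main term genuinely cancels $x/\phi(q)$, all uniformly in $a$ (polynomially in $|a|$) and over the well-factorable weight.
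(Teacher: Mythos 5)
The paper you are working against does not prove this statement at all: Theorem \ref{BFI 4/7} is quoted verbatim from Bombieri--Friedlander--Iwaniec \cite{BFI} and is used in the present article only as an external input, so there is no internal proof to compare yours with. Judged against the original BFI argument, your outline does follow essentially the right route --- a combinatorial decomposition of $\Lambda$ (Heath-Brown/Vaughan), Type I pieces handled directly, Type II pieces handled by exploiting well-factorability to write $q=rs$, Linnik's dispersion method, Cauchy--Schwarz, Poisson summation, and the Deshouillers--Iwaniec spectral bounds for bilinear forms in Kloosterman sums, with the final optimization producing the exponent $4/7$.

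But what you have written is a program, not a proof, and the gap is exactly where the theorem is hard. You never state, let alone establish, the bilinear Kloosterman-sum inequalities you need, so there is no way to check that the dispersion bookkeeping closes and that the optimization over $R,S,M,N$ really yields level $x^{4/7-\epsilon}$ rather than, say, the earlier Fouvry--Iwaniec level $x^{9/17-\epsilon}$; the number $4/7$ is asserted, not derived. Moreover, the decomposition of $\Lambda$ at this level of uniformity does not split cleanly into only the two shapes you describe: BFI must also treat residual pieces in which several smooth variables occur (triple-divisor-type sums), and these require separate estimates that your Type I/Type II dichotomy does not cover. Finally, the remark that Hooley's divisor switching handles short Type I ranges is not part of the BFI mechanism for this theorem (it is the device of the present paper for a different, absolute-value-free statement). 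So the proposal is a reasonable map of the territory, but the decisive estimates are invoked as black boxes and one essential class of sums is missing.
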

Theorem \ref{BFI 4/7} is an improvement of a result of Fouvry \& Iwaniec \cite{fouvry iwaniec PAP}, which showed that \eqref{BFI 4/7 equation} holds with $\lambda(q)$ of level $Q=x^{9/17-\epsilon}$. 

If we remove the weight $\lambda(q)$, we can take $Q=x/(\log x)^B$, which is even further than in the Elliot-Halberstam conjecture. This result was obtained  independently by Fouvry \cite{fouvry titchmarsh} and Bombieri, Friedlander \& Iwaniec \cite{BFI} (in stronger form).
\begin{theorem}[Bombieri, Friedlander and Iwaniec \cite{BFI}]
\label{BFI}
Let $a\neq 0$, $\lambda<\frac 1{10}$ and $R<x^{\lambda}$. For any $A>0$ there exists $B=B(A)$ such that provided $QR<x/(\log x)^B$ we have 
\begin{equation}
\sum_{\substack{r\leq R \\ (r,a)=1}} \left |\sum_{\substack{q\leq Q \\ (q,a)=1}} \left( \psi(x;qr,a)-\Lambda(a)-\frac x{\phi(qr)} \right) \right| \ll_{a,A,\lambda} \frac x{(\log x)^A}.
\label{equation BFI}
\end{equation}
\end{theorem}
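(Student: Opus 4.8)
\emph{Proof sketch.} The plan is to follow the method of Bombieri, Friedlander and Iwaniec: strip the absolute values, perform a divisor switch, break $\Lambda$ into bilinear pieces, and bound these by Linnik's dispersion method. First I would choose, for each $r$, a complex number $\xi_r$ with $|\xi_r|\le 1$ so that the left-hand side of \eqref{equation BFI} equals $S:=\sum_{r\le R,\,(r,a)=1}\xi_r\sum_{q\le Q,\,(q,a)=1}\bigl(\psi(x;qr,a)-\Lambda(a)-x/\phi(qr)\bigr)$. Since $(r,a)=1$, every modulus $qr$ that occurs is coprime to $a$, so for each admissible $q$ the term $n=a$ contributes exactly $\Lambda(a)$ to $\psi(x;qr,a)$ and cancels the $-\Lambda(a)$; after discarding the $O_a(\log x)$ coming from the bounded set of $n$ with $n\le 1$ (or $n<a$), the inner sum becomes $\sum_{q}\bigl(\psi^{*}(x;qr,a)-x/\phi(qr)\bigr)$, where $\psi^{*}(x;qr,a)$ denotes $\psi(x;qr,a)$ with the single term $n=a$ removed.

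Fix a parameter $Q_0$, to be taken of size $x^{1/2}(\log x)^{-C}/R$ with $C=C(A)$ large, and split the $q$-sum at $Q_0$. For $q\le Q_0$ one has $qr\le Q_0R\le x^{1/2}(\log x)^{-C}$, and bounding $\sum_{r\le R}\sum_{q\le Q_0}|\psi^{*}(x;qr,a)-x/\phi(qr)|$ by $\sum_{m\le Q_0R}\tau(m)\max_{(a,m)=1}|\psi(x;m,a)-x/\phi(m)|$ shows this part is $\ll_A x/(\log x)^A$ by a standard $\tau$-weighted form of \eqref{BV}. For $Q_0<q\le Q$ I would apply Hooley's variant of the divisor switch: detecting $(q,a)=1$ by $\sum_{e\mid(q,a)}\mu(e)$, writing $q=eq'$, and writing $n-a=eq'rs$ with $s\ge 1$, the sum $\sum_{Q_0<q\le Q,\,(q,a)=1}\psi^{*}(x;qr,a)$ becomes $\sum_{e\mid a}\mu(e)$ times a sum over $1\le s<(x-a)/(Q_0r)$ of sums of $\Lambda(n)$ over $n$ in the progression $n\equiv a\ (\bmod\ ers)$ lying in the interval $\bigl(a+Q_0rs,\ \min(x,\,a+Qrs)\bigr]$. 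Since $e\mid a$ forces $e\ll_a 1$, the moduli $ers$ here are $\ll_a x/Q_0=x^{1/2}(\log x)^{C}R$, which is bounded by $x^{3/5+o(1)}$ because $R<x^{1/10}$; thus the switch has traded the original moduli, ranging up to $x/(\log x)^{B}$, for moduli only somewhat beyond $x^{1/2}$, while the generous admissible level $QR<x/(\log x)^{B}$ reappears as a long interval of summation for $n$, i.e.\ many terms per progression in the main ranges of $s$.

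It remains to estimate these $\psi$-type sums to moduli of size at most $x^{3/5+o(1)}$, averaged over $r\le R$ against $\xi_r$ and over $s$. For this I would insert a combinatorial identity for $\Lambda$ of Vaughan's or Heath--Brown's type, splitting each such sum into Type I sums (one variable confined to a short interval, the ensuing progression count handled by elementary estimates or the large sieve) and Type II bilinear sums $\sum_{m}\alpha_m\sum_{k}\beta_k\,[\,mk\equiv a\ (\bmod\ ers)\,]$, with $m$ and $k$ each running over genuine intervals. The Type I contribution together with the expected main terms of the Type II sums produces the main term, and a routine computation---using multiplicativity of $\phi$ across greatest common divisors and the evaluation of $\sum_{q\le Q,\,(q,a)=1}1/\phi(q)$---shows it reassembles into $\sum_{q\le Q,\,(q,a)=1}x/\phi(qr)$, which cancels in $S$.

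The heart of the matter is the Type II estimate. I would apply Cauchy--Schwarz in the smooth variable, expand the square, and isolate the off-diagonal, reducing the problem to bounding averages over $r\le R$ and $s$ of incomplete Kloosterman sums $\sum_{n}e^{2\pi i a\overline{n}/(ers)}$ (where $\overline{n}$ is the inverse of $n$ modulo $ers$), to which the Weil bound suffices after the trivial completion; summation over $r$ and $s$ then saves a power of $x$ over the trivial estimate. The hypotheses $R<x^{1/10}$ and $QR<x/(\log x)^{B}$ are exactly what is needed for the resulting error, once the divisor factors $\tau(m)$ and the accumulated powers of $\log x$ are accounted for, to be $\ll_{a,A,\lambda}x/(\log x)^{A}$ upon a suitable choice of $B=B(A)$. (Alternatively, one may feed the bilinear sums directly into the estimates for sums with Kloosterman fractions of Fouvry--Iwaniec or Bombieri--Friedlander--Iwaniec.) Summing the errors against $|\xi_r|\le 1$ completes the proof. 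The Type II dispersion estimate is the main obstacle; everything else is bookkeeping.
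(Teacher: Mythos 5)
You should first note that the paper does not prove this statement at all: Theorem \ref{BFI} is quoted as a known result of Bombieri--Friedlander--Iwaniec \cite{BFI} (with a version due independently to Fouvry \cite{fouvry titchmarsh}), and it is used as a black box, for instance in the proof of Proposition \ref{proposition M entier}. So there is no ``paper's proof'' to match; the only question is whether your sketch could stand on its own as a proof of the cited theorem, and it cannot. Your roadmap --- a $\tau$-weighted Bombieri--Vinogradov bound for $qr\le x^{1/2-o(1)}$, Hooley's divisor switch for the large moduli, a combinatorial identity for $\Lambda$, and the dispersion method for the bilinear pieces --- is indeed the strategy of the original papers, but the decisive step is precisely the one you dispose of in a sentence. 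After your switch the moduli $ers$ run up to about $e\,x^{1/2}(\log x)^{C}R$, i.e.\ up to $x^{3/5+o(1)}$ since $R$ may be as large as $x^{1/10}$, and you must establish equidistribution of primes to these moduli on average. The claim that ``the Weil bound suffices after the trivial completion'' and that summation over $r$ and $s$ ``saves a power of $x$'' is an assertion, not an argument, and it is not how the theorem is actually proved: in \cite{BFI} and \cite{fouvry titchmarsh} the Type~II/dispersion analysis rests on the Deshouillers--Iwaniec estimates for \emph{averages} of Kloosterman sums (Kuznetsov formula, spectral theory of automorphic forms), which exploit cancellation over the moduli beyond what Weil's bound gives termwise; pointwise Weil is essentially what keeps one pinned near exponent $\tfrac12$. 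Relatedly, your sketch contains no computation from which the threshold $\lambda<\tfrac1{10}$, or any admissible choice of $B(A)$, could emerge --- yet that numerology is exactly where the content of the theorem lies.

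A secondary but telling point: you assert that the Type~I contributions ``reassemble'' into $\sum_{q}x/\phi(qr)$ and cancel. The present paper exists because this bookkeeping is \emph{not} clean: the switched ranges produce genuine secondary main terms of size roughly $x\log M/(rM)$ (Theorem \ref{main result} and Corollary \ref{corollaire limitations}), which is why, for $a=\pm1$, the statement is false with $B(A)=A$ and requires $B(A)>A$. A proof outline that never sees these terms cannot be trusted to verify the stated bound for a concrete $B(A)$; it is a plausible table of contents for the known proof, with the hard theorem and the delicate main-term analysis both missing.
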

\begin{remark}
We subtracted $\Lambda(a)$ from $\psi(x;qr,a)$ in \eqref{equation BFI} because the arithmetic progression $a \bmod qr$ contains the prime power $p^e$ for all values of $qr$ if $a=p^e$. This induces a negligible error term in \eqref{equation BFI} (for $B>A$). 
\end{remark}

In this article we focus on Theorem \ref{BFI}. We show in Corollary \ref{corollaire limitations} that for any $A>0$,
\begin{itemize}
\item If $a=\pm1$, then Theorem \ref{BFI} holds if $B(A)>A$, and is false if $B(A)=A$.

\item If $a=\pm p^e$, then Theorem \ref{BFI} holds if $B(A)=A$, and is false if $B(A)<A$.

\item If $a$ has more than two prime factors, then Theorem \ref{BFI} holds if $B(A)>\frac{538}{743} A$.
\end{itemize}
%

One of the applications of Theorem \ref{BFI} and of Fouvry's result \cite{fouvry titchmarsh} is the best known estimate for the Titchmarsh divisor problem. We will show that Theorem \ref{BFI} yields a generalization of this result, that is a Bombieri-Vinogradov type result for the Titchmarsh divisor problem in arithmetic progressions, up to level $Q=x^{1/10-\epsilon}$. 

\section{Acknowledgements}

I would like to thank my supervisor Andrew Granville for his advice, as well as my colleagues Farzad Aryan, Mohammad Bardestani, Dimitri Dias, Tristan Freiberg and Kevin Henriot for many fruitful conversations. I would also like to thank Adam T. Felix for his comments. Ce travail a été rendu possible grâce à des bourses doctorales du Conseil de Recherche en Sciences Naturelles et en Génie du Canada et de la Faculté des Études Supérieures et Postdoctorales de l'Université de Montréal.

\section{Statement of results}
For an integer $r\geq 1$, we will use the notation $$r':=\prod_{p\mid r} p.$$ Here is our main result.
\begin{theorem}
 \label{main result}
Fix an integer $a\neq 0$ and two positive real numbers $\lambda<\frac 1{10}$ and $A$. We have for $R=R(x)\leq x^{\lambda}$ and $M=M(x)\leq (\log x)^A$ that
$$  \sum_{\substack{ \frac R2 < r \leq  R \\ (r,a)=1}} \left| \sum_{\substack{q\leq \frac x{rM} \\ (q,a)=1}} \left( \psi(x;qr,a)-\Lambda(a)-\frac x{\phi(qr)}\right) -\frac{\phi(a)}{a}\frac {x}{rM} \mu(a,r,M)\right| \ll_{a,A,\epsilon,\lambda} \frac{x}{M^{\frac{743}{538}-\epsilon}}, $$
where the "average" is given by 
$$ \mu(a,r,M):=\begin{cases}
                -\frac 12 \log M-C_5(r) &\text{ if } a=\pm 1 \\
		-\frac 12 \log p &\text{ if } a=\pm p^e \\
		0 &\text{ otherwise},
               \end{cases}
$$
with 
$$C_5(r):=\frac 12\left( \log 2\pi +1+ \gamma + \sum_p \frac{\log p}{p(p-1)}+\sum_{p\mid r} \frac{\log p}{p}\right).$$

We also have the following similar result:
\begin{equation*}  \sum_{\substack{r \leq R \\ (r,a)=1}} \left| \sum_{\substack{q\leq \frac x{RM} \\ (q,a)=1}} \left( \psi(x;qr,a)-\Lambda(a)-\frac x{\phi(qr)}\right) -\frac{\phi(a)}{a}\frac {x}{RM} \mu(a,r,RM/r)\right|  \ll_{a,A,\epsilon,\lambda}  \frac{x}{M^{\frac{743}{538}-\epsilon}}. 
\end{equation*}

\end{theorem}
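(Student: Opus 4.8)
The plan is to obtain the asymptotic evaluation of the main term by passing from the progression $a \bmod qr$ to a dual problem via Hooley's divisor‑switching identity, and then to control the resulting error by an appeal to Theorem~\ref{BFI}. First I would start from the definition of $\psi(x;qr,a)$ and write the inner sum over $q\le x/(rM)$ as a sum over primes (or prime powers) $p\equiv a \pmod{qr}$ with $p\le x$; equivalently, for each prime power $n\le x$ with $n\equiv a\pmod r$ and $n\neq a$, the number of admissible $q$ is governed by the condition $qr\mid n-a$ together with $q\le x/(rM)$. The key step is to reorganize this double sum by the size of the complementary divisor: writing $n-a = qr\cdot s$, the constraint $q\le x/(rM)$ becomes $s\ge (n-a)/(x/M) \asymp Ms/x\cdot$(something), so that summing over $s$ in a short-ish range and over $n\le x$ in a progression produces, after partial summation and the prime number theorem for progressions to small moduli $r$ (here $r\le x^\lambda$ with $\lambda<1/10$, so $r$ is a genuinely small modulus and the error is power‑saving), precisely the main term $\tfrac{\phi(a)}{a}\tfrac{x}{rM}\mu(a,r,M)$, with the three cases in $\mu$ coming from whether $a=\pm1$ (all of $s$ contributes, giving the $-\tfrac12\log M$ and the constant $C_5(r)$ from Mertens‑type sums), $a=\pm p^e$ (only $s$ with $p\mid s$ or the trivial solution $n=p^e$ survives, giving $-\tfrac12\log p$), or $a$ with $\ge 3$ prime factors (the local densities cancel the logarithm entirely, leaving $0$).

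The next step is the error analysis. After extracting the main term, the remaining quantity is, up to acceptable errors, exactly of the shape appearing on the left side of \eqref{equation BFI} in Theorem~\ref{BFI}: a sum over $r\le R$ with $(r,a)=1$ of the absolute value of $\sum_{q\le Q}(\psi(x;qr,a)-\Lambda(a)-x/\phi(qr))$, now with $Q = x/(rM)$ so that $QR \le x/M \le x/(\log x)^{A/?}$ — and here is where the quantitative refinement enters. Rather than invoking Theorem~\ref{BFI} as a black box with its $(\log x)^{-A}$ saving, I would re‑run the Bombieri–Friedlander–Iwaniec argument keeping track of the dependence on the parameter $M$ (equivalently on how close $QR$ is to $x$), using Hooley's variant of divisor switching in place of the original switching, which is what gives the exponent $\tfrac{743}{538}$: the dispersion/large‑sieve inputs lose a bounded power of $\log$, and optimizing the various exponents in the Fouvry–Iwaniec–BFI machinery against the width $x/(QR)=M$ yields a saving of $M^{743/538-\epsilon}$ instead of merely $(\log x)^{-A}$. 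Concretely this means bounding the relevant incomplete Kloosterman‑sum / exponential‑sum averages (Weil bound plus the Deshouillers–Iwaniec spectral estimates for sums of Kloosterman sums) with the level‑of‑distribution parameter $\lambda<1/10$ providing the slack, and tracking the polynomial‑in‑$M$ dependence through the combinatorial (Vaughan/Heath–Brown) decomposition of $\Lambda$.

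For the second displayed estimate, the argument is essentially identical: one replaces the per‑$r$ cutoff $q\le x/(rM)$ by the uniform cutoff $q\le x/(RM)$, so that the complementary‑divisor variable $s$ now lives in a range depending on $RM/r$ rather than $M$, which is exactly why the average becomes $\mu(a,r,RM/r)$; the error bound is unchanged because $r\le R$ forces $RM/r\ge M$ and the saving only improves. I would present the two cases in parallel, isolating the one lemma (the divisor‑switching identity with explicit main term) that does all the work and then citing the quantitative BFI bound for the tail.

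The main obstacle I expect is the bookkeeping in the error term: tracking the precise power of $M$ (respectively the precise exponent $\tfrac{743}{538}$) through the full BFI/Fouvry–Iwaniec apparatus — the Type~I/Type~II split, the dispersion method, the reduction to Kloosterman sums, and the spectral large sieve — while simultaneously keeping the dependence on $r$ (which ranges up to $x^\lambda$) completely uniform. The extraction of the main term $\mu$, by contrast, is a relatively clean exercise in partial summation against the prime number theorem for small moduli, and the constant $C_5(r)$ will fall out of standard Mertens estimates; the genuinely delicate part is verifying that Hooley's switching is exactly the device that converts the logarithmic saving into the stated polynomial saving in $M$, and that the numerology $\tfrac{743}{538}$ is sharp for that method.
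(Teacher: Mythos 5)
There is a genuine gap, and it concerns the very source of the exponent $\tfrac{743}{538}$. In the paper this exponent has nothing to do with the internals of the Bombieri--Friedlander--Iwaniec dispersion machinery: Theorem \ref{BFI} is used strictly as a black box, and that is enough because $M\leq(\log x)^A$, so any required power of $M$ can be absorbed into a $(\log x)^{-B}$ saving by choosing $B=B(2A)$ appropriately. The real work, which your proposal omits entirely, is the asymptotic evaluation with a \emph{power-of-$M$} saving of the weighted Euler-phi sum $\sum_{n\leq M,\,(n,a)=1}\frac{1}{\phi(nr)}\bigl(1-\frac nM\bigr)$ (Lemma \ref{somme inverse euler avec poids}, built on Lemma 6.9 of \cite{fiorilli}): after divisor switching, the main and secondary terms of the theorem, i.e.\ $\frac{\phi(a)}{a}\frac{x}{rM}\mu(a,r,M)$ with its three cases, are exactly the secondary terms of this weighted sum, and the error $E(a,r,M)\ll \frac{1}{rM}(a'/M)^{\frac{205}{538}-\epsilon}$ comes from a contour-shift argument using Huxley's subconvexity bound for $\zeta(s)$; the stated exponent is $\frac{743}{538}=1+\frac{205}{538}$. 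Your plan to ``re-run the BFI argument keeping track of the dependence on $M$'' and to extract $\tfrac{743}{538}$ from Kloosterman-sum and spectral large-sieve estimates misidentifies where the saving comes from and would not produce this numerology; without the analytic lemma you have no mechanism for beating the trivial $O(x/M)$ scale of the secondary term.

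Two further points would also fail as written. First, you claim the main term follows from ``the prime number theorem for progressions to small moduli $r$'' with power-saving error for $r\leq x^{\lambda}$, $\lambda<\tfrac1{10}$; such moduli are far beyond Siegel--Walfisz range, and the paper instead controls the divisor-switched sums on average over $r$ (moduli $sr\leq 2RL$) via the Bombieri--Vinogradov theorem, which is why an absolute value over $r$ is needed outside. Second, for the second displayed estimate your assertion that ``the error bound is unchanged because $RM/r\geq M$'' glosses over the fact that for small $r$ the switching parameter $P=RM/r$ can be as large as $RM$, where the switching argument breaks down; the paper handles this by splitting at $r=R/(\log x)^{B}$, treating the small-$r$ range directly with Theorem \ref{BFI} and only switching divisors on the range $R/(\log x)^{B}<r\leq R$.
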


As a corollary, we get a more precise form of Theorem \ref{BFI}.
\begin{corollary}
\label{corollaire limitations}
Fix an integer $a\neq 0$ and two positive real numbers $\lambda<\frac 1{10}$ and $A$. We have for $R=R(x)\leq x^{\lambda}$ and $M=M(x)\leq (\log x)^A$ that
$$ \sum_{\substack{r\leq R \\ (r,a)=1}} \left |\sum_{\substack{q\leq \frac x{RM} \\ (q,a)=1}} \left( \psi(x;qr,a)-\Lambda(a)-\frac x{\phi(qr)} \right) \right| = \left(\frac{\phi(a)}{a}\right)^2 \frac x{M}\nu(a,M)+O_{a,A,\epsilon,\lambda}\left(\frac{x}{M^{\frac{743}{538}-\epsilon}}\right), $$
where 
$$ \nu(a,M):=\begin{cases}
                \frac 12 \log M+C_6+O\left(\frac{\log(RM)}R\right) & \text{ if } a=\pm 1 \\
		  \frac 12 \log p+O\left(\frac 1R\right) & \text{ if } a=\pm p^e \\
		0 &\text{ otherwise,}
             \end{cases}
$$
with 
$$ C_6:= C_5(1) + \frac 12+\frac 12\sum_p \frac{\log p}{p^2}. $$
%
%

%
%
%
%
\end{corollary}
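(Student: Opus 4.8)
The plan is to deduce the Corollary directly from Theorem \ref{main result}. The key observation is that Theorem \ref{main result} (in its second, ``$x/(RM)$'' form) already isolates the main term $\frac{\phi(a)}{a}\frac{x}{RM}\mu(a,r,RM/r)$ from the inner sum over $q$, up to an error that is small after summing over $r\leq R$. So the strategy is: first, apply the triangle inequality in the form $\bigl| |U| - |V| \bigr| \leq |U-V|$ with $U=\sum_q(\cdots)$ the inner sum in the Corollary and $V$ the same sum minus the main term $\frac{\phi(a)}{a}\frac{x}{RM}\mu(a,r,RM/r)$; this shows that the left-hand side of the Corollary differs from $\sum_{r\leq R,(r,a)=1}\bigl|\frac{\phi(a)}{a}\frac{x}{RM}\mu(a,r,RM/r)\bigr|$ by at most the error term $O(x/M^{743/538-\epsilon})$ coming from Theorem \ref{main result}. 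It then remains to evaluate
$$ \frac{\phi(a)}{a}\frac{x}{RM}\sum_{\substack{r\leq R\\(r,a)=1}} \bigl|\mu(a,r,RM/r)\bigr|. $$

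The second step is to evaluate this last sum in each of the three cases for $a$. When $a$ has more than two prime factors, $\mu\equiv 0$, so there is nothing to do and $\nu(a,M)=0$. When $a=\pm p^e$, we have $\mu(a,r,RM/r)=-\tfrac12\log p$ (independent of $r$), so the sum is $\tfrac12\log p\cdot\#\{r\leq R:(r,a)=1\}=\tfrac12\log p\cdot(\frac{\phi(a)}{a}R + O(1))$ — wait, more carefully, $\#\{r\leq R:(r,p)=1\} = R\prod_{p\mid a}(1-1/p)+O(2^{\omega(a)}) = \frac{\phi(a')}{a'}R+O_a(1)$, and since $a=\pm p^e$ this is $(1-1/p)R+O(1)$. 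Multiplying by $\frac{\phi(a)}{a}\frac{x}{RM}=\frac{p-1}{p}\frac{x}{RM}$ gives $\left(\frac{\phi(a)}{a}\right)^2\frac{x}{M}\bigl(\tfrac12\log p + O(1/R)\bigr)$, which is the claimed shape. When $a=\pm1$, $\mu(1,r,RM/r)=-\tfrac12\log(RM/r)-C_5(r)$, and for $RM$ large this is negative for every $r\leq R$ (since $RM/r\geq M$), so $|\mu|=\tfrac12\log(RM/r)+C_5(r)$ and one must compute $\sum_{r\leq R}\bigl(\tfrac12\log(RM/r)+C_5(r)\bigr)$. Writing $C_5(r)=C_5(1)+\tfrac12\sum_{p\mid r}\frac{\log p}{p}$, this splits into $\tfrac12 R\log(RM)-\tfrac12\sum_{r\leq R}\log r + (C_5(1))R + \tfrac12\sum_{r\leq R}\sum_{p\mid r}\frac{\log p}{p}$; by $\sum_{r\leq R}\log r = R\log R - R + O(\log R)$ and $\sum_{r\leq R}\sum_{p\mid r}\frac{\log p}{p} = R\sum_p\frac{\log p}{p^2}+O(\log R)$ (each prime $p$ contributes $\frac{\log p}{p}\lfloor R/p\rfloor$), one collects the $R\log M$ term, the $R$-constant $\tfrac12\log(\cdot\cdot\cdot)$ collapsing into $C_5(1)+\tfrac12+\tfrac12\sum_p\frac{\log p}{p^2}=C_6$, and an $O(\log(RM))$ remainder. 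Multiplying through by $\frac{\phi(a)}{a}\frac{x}{RM}=\frac{x}{RM}$ produces $\frac{x}{M}\bigl(\tfrac12\log M+C_6\bigr)+O\bigl(\frac{x\log(RM)}{RM}\bigr)$, matching the stated $\nu(a,M)$ after noting $(\phi(a)/a)^2=1$.

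The main obstacle is purely bookkeeping: making sure the constant $C_6$ really is assembled correctly from the three pieces ($-\tfrac12\cdot(-1)=\tfrac12$ from the $-R$ term in $\sum\log r$; $C_5(1)$ from the constant term of $C_5(r)$; and $\tfrac12\sum_p\frac{\log p}{p^2}$ from the $\sum_{p\mid r}\frac{\log p}{p}$ part), and tracking that the error term from Theorem \ref{main result}, namely $O(x/M^{743/538-\epsilon})$, genuinely dominates the $O(x\log(RM)/(RM))$ arithmetic error — this holds since $M\leq(\log x)^A$ while $R$ can be a small power of $x$, so $x\log(RM)/(RM)$ is much smaller. One subtlety worth checking is the sign/positivity claim ``$\mu<0$ for all $r\leq R$ when $a=\pm1$'', needed to drop the absolute value; this is immediate once $RM$ exceeds a fixed constant depending only on the absolute constant in $C_5$, i.e. for $x$ large, which is the regime of interest. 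No genuinely hard analytic input is needed beyond Theorem \ref{main result} itself.
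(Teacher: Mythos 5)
Your argument is correct and essentially reproduces the paper's own proof: both deduce from the second estimate of Theorem \ref{main result}, via the triangle inequality and the sign $\mu(a,r,RM/r)\leq 0$, that the left-hand side equals $\frac{\phi(a)}{a}\frac{x}{RM}\sum_{r\leq R,\,(r,a)=1}\left|\mu(a,r,RM/r)\right|$ up to $O\!\left(x/M^{\frac{743}{538}-\epsilon}\right)$, and then evaluate this sum case by case using $\sum_{r\leq R}\log r=R\log R-R+O(\log R)$ and $\sum_{p\leq R}\frac{\log p}{p}\left\lfloor \frac Rp\right\rfloor=R\sum_p\frac{\log p}{p^2}+O(\log R)$, which assembles $C_6$ exactly as in the paper. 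One minor caveat: your side remark that the arithmetic error $O\!\left(x\log(RM)/(RM)\right)$ is dominated by $x/M^{\frac{743}{538}-\epsilon}$ need not hold when $R$ is bounded, but this is harmless because, as in your final formula and in the statement, that error is kept inside $\nu(a,M)$ as the $O\!\left(\log(RM)/R\right)$ term.
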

\begin{remark}
If $a$ has at most $1$ prime factor, then for $M$ and $R$ both tending to infinity we have that
$$ \nu(a,M)\sim \begin{cases}
               \frac 12 \log M & \text{ if } a=\pm 1 \\
		 \frac 12 \log p & \text{ if } a=\pm p^e. \\
             \end{cases}
$$
(If $R$ is bounded, then we should multiply by $ \frac{a}{\phi(a)} \frac{\# \{ r\leq R : (r,a)=1 \}}{R} $ in the case $a=\pm p^e$, and by $\frac{\lfloor R \rfloor}{R}$ in the case $a=\pm 1$.)
\end{remark}

Another corollary of our results (which actually follows from Theorem \ref{BFI}) is a Bombieri-Vinogradov type result for the Titchmarsh divisor problem in arithmetic progressions. We use the following notation for the divisor function: $\tau(n):=\sum_{d\mid n}1$.
\begin{theorem}
\label{B-V pour titchmarsh thm}
Fix an integer $a\neq 0$ and let $\lambda<\frac 1{10}$ and $A$ be two fixed positive real numbers. We have for $Q\leq x^{\lambda}$ that
\begin{equation}
\label{B-V pour titchmarsh}
\sum_{q\leq Q} \left| \sum_{|a|/q<m\leq x/q} \Lambda(qm+a)\tau(m) - M.T.\right| \ll_{a,A,\lambda} \frac{x}{(\log x)^A}, 
\end{equation}
where the main term is
$$ M.T. := \frac xq \left( C_1(a,q)\log x +  2C_2(a,q) +C_1(a,q)\log\left(\frac{(q')^2}{eq}\right) \right),$$
with $C_1(a,q)$ and $C_2(a,q)$ defined as in section \ref{section notation}.
\end{theorem}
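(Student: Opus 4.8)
The plan is to deduce \eqref{B-V pour titchmarsh} from Theorem \ref{BFI} by opening the divisor function, performing an \emph{asymmetric} Dirichlet hyperbola split, applying one round of Hooley's divisor-switching to the long part, and then extracting $M.T.$ by an elementary computation. First I would dispose of the $q$ with $(q,a)>1$ (these contribute only $O_a(Q(\log x)^2)$) and of the range $m\le|a|/q$ (costing $O_{a,\epsilon}(x^\epsilon)$). Writing $\tau(m)=\sum_{d\mid m}1$ and reversing the order of summation, one obtains, up to an admissible error,
$$\sum_{|a|/q<m\le x/q}\Lambda(qm+a)\tau(m)=\sum_{d\le x/q}\bigl(\psi(x;qd,a)-\Lambda(a)\bigr),$$
where, exactly as in Theorem \ref{BFI}, the term $-\Lambda(a)$ removes the fixed prime power $a$ sitting in the progression $a\bmod qd$ when $a=\pm p^e$; for $(d,a)>1$ the summand is $O_a(\log x)$.

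Next I would fix a cutoff $z$ with $x^{1/2}\ll z\ll x^{1-\lambda}$ — for instance $z=x^{3/5}$, which works for all large $x$ since $\lambda<\tfrac1{10}$ — and split the $d$-sum at $z$ into $S_1(q)=\sum_{d\le z}(\psi(x;qd,a)-\Lambda(a))$ and $S_2(q)=\sum_{z<d\le x/q}(\psi(x;qd,a)-\Lambda(a))$. In $S_2$, since $z$ exceeds $|a|$, the classes $a\bmod qd$ contain no positive integer below $a$, so $\psi(x;qd,a)-\Lambda(a)=\sum_{\ell\ge1}\Lambda(qd\ell+a)\,\mathbf{1}_{qd\ell\le x-a}$ exactly; interchanging the roles of $d$ and $\ell$ (Hooley's variant of the switching technique) and noting that for fixed $\ell$ the numbers $q\ell d+a$ describe the progression $a\bmod q\ell$, one finds
$$S_2(q)=\sum_{\ell<L_q}\bigl(\psi(x;q\ell,a)-\psi(q\ell z+a;q\ell,a)\bigr)+O_a\!\Bigl(\tfrac{x}{qz}\log x\Bigr),\qquad L_q:=\tfrac{x-a}{qz}.$$
The point of the asymmetric split is that the moduli occurring in $S_2$ satisfy $q\ell<(x-a)/z\ll x^{1/2}$, so although the argument $q\ell z+a$ now varies, one stays within the range of classical Bombieri–Vinogradov; the large moduli $qd$, which reach $x^{3/5+\lambda}>x^{1/2}$, are confined to $S_1$, where Theorem \ref{BFI} is available.

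Now I would subtract the expected main terms $x/\phi(qd)$ from $S_1$ and $x/\phi(q\ell)$, $(q\ell z+a)/\phi(q\ell)$ from $S_2$ (correct precisely on $(d,a)=1$, resp.\ $(\ell,a)=1$, the other residues being absorbed into the $O_a(\log x)$ per term). The $S_1$ error sum is \emph{exactly} $\sum_{q\le Q,(q,a)=1}\bigl|\sum_{d\le z,(d,a)=1}(\psi(x;qd,a)-\Lambda(a)-x/\phi(qd))\bigr|$, i.e.\ the quantity bounded by Theorem \ref{BFI} with its ``$r$'' being our $q\le x^\lambda$ and its ``$q$'' our $d\le z$, the hypothesis $zQ<x/(\log x)^B$ being trivially met; hence it is $\ll_{a,A,\lambda}x/(\log x)^A$. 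For the two $S_2$ error sums I would pull the absolute value inside and observe that each summand depends only on $m:=q\ell$ and that $\ell<L_q$ amounts to $m<(x-a)/z$, so they collapse to $\sum_{m<(x-a)/z}\tau(m)\max_{y\le x}\bigl|\psi(y;m,a)-\Lambda(a)\mathbf{1}_{a\le y}-y/\phi(m)\bigr|\ll_{a,A}x/(\log x)^A$, a divisor-weighted Bombieri–Vinogradov bound with a maximum over the range, legitimate because $m\ll x^{1/2}$. It then remains to evaluate
$$x\!\!\sum_{\substack{d\le z\\(d,a)=1}}\!\!\frac1{\phi(qd)}+(x-a)\!\!\sum_{\substack{\ell<L_q\\(\ell,a)=1}}\!\!\frac1{\phi(q\ell)}-qz\!\!\sum_{\substack{\ell<L_q\\(\ell,a)=1}}\!\!\frac{\ell}{\phi(q\ell)};$$
using $\phi(qn)=\phi(q)\phi(n)(q,n)/\phi((q,n))$ and the standard estimates $\sum_{n\le T,(n,a)=1}1/\phi(qn)=\frac{C(q,a)}{\phi(q)}(\log T+D(q,a))+O_{q,a}(T^{-1/2})$, $\sum_{\ell\le T,(\ell,a)=1}\ell/\phi(q\ell)=\frac{E(q,a)}{\phi(q)}T+O_{q,a}((\log T)^2)$ and inserting $T=z$ and $T=L_q$, the $\log z$ contributions cancel ($\log z+\log L_q=\log\frac{x-a}{q}$) and the product $z\cdot L_q=\frac{x-a}{q}$ eliminates $z$ altogether, leaving $\frac{x\,C(q,a)}{\phi(q)}\bigl(\log\frac xq+2D(q,a)\bigr)-\frac{x\,E(q,a)}{\phi(q)}+O_a(\log x/\phi(q))$. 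Identifying $C(q,a),D(q,a),E(q,a)$ with the arithmetic constants behind $C_1(a,q)$ and $C_2(a,q)$ of section \ref{section notation} turns this into $M.T.$, and summing the remaining $O_a(\log x/\phi(q))$ over $q\le Q$ gives $O_a((\log x)^2)$.

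The main obstacle is not analytic — Theorem \ref{BFI} for $S_1$ and Bombieri–Vinogradov for $S_2$ enter only as black boxes, which is why \eqref{B-V pour titchmarsh} follows already from Theorem \ref{BFI} — but rather bookkeeping: one must track the correction $-\Lambda(a)$, the coprimality conditions $(d,a)=1$ and $(\ell,a)=1$, and the small-modulus ranges through the divisor switch carefully enough that no spurious $z$-dependent term survives in the main term, and then carry out the lengthy but routine evaluation of the elementary sums and match the constants exactly with $C_1(a,q)$ and $C_2(a,q)$.
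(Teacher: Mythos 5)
Your argument is correct in substance and uses the same three ingredients as the paper -- Theorem \ref{BFI} for the large moduli, Hooley's divisor switching followed by the classical Bombieri--Vinogradov theorem for the short complementary range, and the elementary $\sum_{n\le T,(n,a)=1}1/\phi(qn)$-type estimates to extract the main term -- but it is organized differently. The paper does not open $\tau$ at the start: it first proves Proposition \ref{proposition M entier} for dyadic ranges $R/2<r\le R$, takes $M=1$, sums dyadically, and only at the very end exchanges the order of summation to convert $\sum_{q\le x/r}(\psi(x;qr,a)-\Lambda(a))$ into $\sum_n\Lambda(n)\tau((n-a)/r)$; moreover its split point is $x/(RL)$ with $L$ a power of $\log x$, so after switching the new variable $s$ is at most $2L$ and the multiplicity of a modulus in the Bombieri--Vinogradov step is bounded by $2L$, so plain BV with a maximum suffices. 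You instead split at a fixed power $z=x^{3/5}$, which makes the switched moduli run up to $x^{2/5}$ with multiplicity $\tau(m)$, so you need the divisor-weighted BV bound $\sum_{m\le x^{2/5}}\tau(m)\max_{y\le x}|\psi(y;m,a)-y/\phi(m)|\ll x/(\log x)^A$; this is standard (Cauchy--Schwarz against the trivial bound $\ll (x/m)\log x$) but you should say so, since you only assert it. Two further points of bookkeeping to tighten: your elementary estimates are stated with $O_{q,a}$ constants, but since you then sum over $q\le Q$ you need uniformity in $q$ (the paper's Lemma \ref{estimation �l�mentaires} supplies it, with explicit $3^{\omega(aq)}$ factors), and the leftover error from the main-term evaluation is not $O_a(\log x/\phi(q))$ per $q$ but rather of size up to about $x^{o(1)}(z+x/z^{1/2})$ per $q$ -- still $\ll x^{1-\delta}$ after summing over $q\le x^{1/10}$, so harmless, but the accounting as written understates it. What your route buys is a direct, self-contained deduction avoiding the dyadic decomposition and the ``$M$ integer'' issue; what the paper's route buys is that the same proposition also yields Theorem \ref{main result} for general $M$, and that only the unweighted Bombieri--Vinogradov theorem is needed after switching.
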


A version of Theorem \ref{B-V pour titchmarsh thm} was obtained independently by Felix \cite{felix}, who also showed how to apply this result to a question related to Artin's primitive root conjecture. 
Using Theorem \ref{B-V pour titchmarsh thm}, one can give a slight improvement of Theorem 1.5 of \cite{felix}, that is replace $O(\log\log x)$ by $c \log\log x + O(1)$, for some constant $c$.


Taking $Q=(\log x)^C$ in Theorem \ref{B-V pour titchmarsh thm}, we obtain a "Siegel-Walfisz theorem" for the Titchmarsh divisor problem, and one could ask if this is sufficient to give the bound \eqref{B-V pour titchmarsh} for $Q=x^{1/2}/(\log x)^B$, since it is known that the Bombieri-Vinogradov theorem holds with fairly general sequences satisfying the Siegel-Walfisz condition. If this is true, then it would yield the following improvement of a dyadic version of Theorem \ref{BFI}.
\begin{proposition}
\label{proposition appliquer titchmarsh a BFI}
Fix an integer $a\neq 0$, a real number $A>0$ and let $R=R(x)\leq x^{1/2}/(\log x)^{3A+5}$. Assume that \eqref{B-V pour titchmarsh} holds for $Q=R(x)$. Then for $L:=(\log x)^{A+3}$ we have
\begin{equation}
\sum_{\substack{\frac R2<r\leq R \\ (r,a)=1}} \left |\sum_{\substack{q\leq \frac x{RL} \\ (q,a)=1}} \left( \psi(x;qr,a)-\Lambda(a)-\frac x{\phi(qr)} \right) \right| \ll_{a,A}  \frac{x}{(\log x)^A}.
\end{equation}
\end{proposition}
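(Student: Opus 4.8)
The plan is to deduce Proposition~\ref{proposition appliquer titchmarsh a BFI} from the assumed Titchmarsh-type estimate \eqref{B-V pour titchmarsh} by reversing the chain of reasoning that normally goes the other way: instead of using a result about primes in progressions $qr$ to estimate sums involving $\Lambda(qm+a)\tau(m)$, I will start from the good bound on $\sum_{q\le R}\big|\sum_m \Lambda(qm+a)\tau(m)-M.T.\big|$ and read off equidistribution of $\psi(x;qr,a)$ averaged over $r\sim R$ and $q\le x/(RL)$. The first step is to open the inner sum over $q$ in the conclusion: write $\sum_{q\le x/(RL)}\psi(x;qr,a)=\sum_{q\le x/(RL)}\sum_{\substack{n\le x\\ n\equiv a\,(qr)}}\Lambda(n)$. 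Writing $n=a+qrm$ (after removing the contribution $n=a=\Lambda(a)$, which is exactly the subtracted term) and interchanging the order of summation, the double sum over $q\le x/(RL)$ and $m$ becomes, for each fixed $r$, a sum of $\Lambda(rm'+a)$ over $m'$ with a divisor-type weight counting the number of factorizations $m'=qm$ with $q\le x/(RL)$; this weight is $\sum_{q\mid m',\, q\le x/(RL)}1$, a truncated divisor function. Replacing the truncation $q\le x/(RL)$ by the full $\tau(m')$ costs a controlled error because $m'\le x/(rL)\le 2x/(RL)$ is itself small, so the truncation is essentially vacuous up to the boundary terms.

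Thus, up to admissible errors, the inner quantity $\sum_{q}\psi(x;qr,a)$ is $\sum_{m\le x/(rL)}\Lambda(rm+a)\tau(m)$ with $r$ playing the role of $q$ in \eqref{B-V pour titchmarsh}, and the subtracted main term $\sum_q x/\phi(qr)$ together with $\Lambda(a)$ should match the predicted $M.T.$ from Theorem~\ref{B-V pour titchmarsh thm} up to lower-order terms; reconciling these two main terms — unwinding $\sum_{q\le x/(RL)}\frac x{\phi(qr)}$ into a shape $\frac{x}{rL}(C_1(a,r)\log x + \cdots)$ via the standard mean value of $1/\phi$ — is the bookkeeping heart of the argument, and one has to be careful that the $L=(\log x)^{A+3}$ and $R\le x^{1/2}/(\log x)^{3A+5}$ are chosen so the ranges line up (the range $m\le x/(rL)$ versus $m\le x/q$ in \eqref{B-V pour titchmarsh}, the role of $q'$ versus $r'$, etc.). Summing the resulting inequality over $\tfrac R2<r\le R$, each term is bounded by $\big|\sum_{m}\Lambda(rm+a)\tau(m)-M.T.\big|$ plus the per-$r$ error from truncation and from approximating the $\phi$-sum; the first aggregate is $\le x/(\log x)^{A+?}$ directly by the hypothesis \eqref{B-V pour titchmarsh} applied with $Q=R(x)\le x^{1/2}/(\log x)^{3A+5}$.

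The key points to verify are: (i) the total contribution of the truncation error $\sum_{r}\big|\sum_{q\le x/(RL)\text{ vs. full }}\big|$ is $O(x/(\log x)^A)$ — here the constraint $r>R/2$ keeps $m$ genuinely small and the number of lost divisors is negligible; (ii) the difference between $\sum_{r}|\ldots - M.T.|$ with the true $M.T.$ of Theorem~\ref{B-V pour titchmarsh thm} and the quantity $\sum_r|\ldots - \sum_q x/\phi(qr)|$ appearing in the proposition is $O(x/(\log x)^A)$, which amounts to showing $\sum_{\tfrac R2<r\le R}\big|\,\frac{x}{rL}(C_1(a,r)\log x+\cdots) - \sum_{q\le x/(RL)}\frac x{\phi(qr)}\big| = O(x/(\log x)^A)$ — a purely elementary estimate on mean values of $1/\phi$ that will demand the slack built into the exponents $3A+5$ and $A+3$; and (iii) absorbing the $-\Lambda(a)$ terms (there are $\ll R$ of them, totalling $\ll R\log x \le x/(\log x)^A$ comfortably, which is the same reasoning as in the Remark after Theorem~\ref{BFI}).

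The main obstacle I expect is step (ii): matching the two a priori different main terms without losing a power of $\log x$. One must exhibit the explicit asymptotic $\sum_{q\le X}\frac{1}{\phi(qr)} = \frac{r}{\phi(r)}\big(\text{something}\big)\log X + c(r) + O(\cdot)$ with enough uniformity in $r$ up to $x^{1/2}$, match its $r$-dependence against $C_1(a,r)$ and $C_2(a,r)$ from section~\ref{section notation}, and check that the residual, summed over the dyadic block $\tfrac R2<r\le R$, stays below $x/(\log x)^A$. All the other pieces (opening $\psi$, interchanging summations, handling the truncated divisor function, and discarding $\Lambda(a)$) are routine once the dyadic range and the sizes of $L$ and $R$ are chosen as in the statement; they were essentially already used in deriving Theorem~\ref{B-V pour titchmarsh thm} from Theorem~\ref{BFI} and are simply run in reverse here.
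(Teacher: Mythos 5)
Your high-level plan (unfold $\psi(x;qr,a)$, interchange the $q$ and $m$ sums so that the hypothesis \eqref{B-V pour titchmarsh} can be applied with the modulus $r$ in the role of $q$) is indeed the paper's first step, but your point (i) — the treatment of the truncation — is wrong, and that is exactly where the real work lies. Writing $n=a+qrm$ and $m'=qm=(n-a)/r$, the variable $m'$ runs up to $(x-a)/r\asymp x/R$, not up to $x/(rL)$ as you assert: you are off by precisely the factor $L=(\log x)^{A+3}$. Hence the truncated weight $\#\{q\mid m':\,q\le x/(RL)\}$ differs from $\tau(m')$ by the divisors $q\in(x/(RL),\,x/r]$, i.e.\ those with small complementary divisor $s=m'/q\le RL/r\le 2L$; their total contribution over $R/2<r\le R$ is of order $\sum_{r\sim R}\sum_{s\le 2L}x/\phi(rs)\asymp x\log L\asymp x\log\log x$, vastly larger than $x/(\log x)^A$, so the truncation is not ``essentially vacuous''. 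Correspondingly, the subtracted term $\sum_{q\le x/(RL)}x/\phi(qr)$ falls short of the $M.T.$ of Theorem \ref{B-V pour titchmarsh thm} by roughly $\frac{x}{r}C_1(a,r)\log(RL/r)$, again of total size $\asymp x\log\log x$, so your step (ii) cannot close either: the two large discrepancies must be shown to cancel, and proving that cancellation is precisely the step you skipped. It is Hooley's divisor switching as in Proposition \ref{proposition M entier}: in the range $x/(RL)<q\le x/r$ one switches to the complementary divisor $s\le 2L$, the moduli become $rs\le 2RL\le 2x^{1/2}/(\log x)^{2A+2}$, and the Bombieri--Vinogradov theorem together with Lemma \ref{somme inverse euler avec poids} shows the switched sum matches the corresponding piece of the main term up to $\ll x/L$. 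This is the only place the hypothesis $R\le x^{1/2}/(\log x)^{3A+5}$ is used; the fact that your argument never invokes it, nor Bombieri--Vinogradov, is a symptom of the gap.

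For comparison, the paper's proof writes the sum over $q\le x/(RL)$ as the full sum over $q\le x/r$ minus the tail $x/(RL)<q\le x/r$: the full sum, after the interchange, is exactly $\sum_m\Lambda(rm+a)\tau(m)$ with the complete divisor function and is controlled by the assumed estimate \eqref{B-V pour titchmarsh}; the main terms are reconciled by the elementary $1/\phi$ estimates (your points (ii) and (iii) are correct and amount to this bookkeeping); and the tail is handled unconditionally by divisor switching plus Bombieri--Vinogradov, giving an error $\ll x/L\le x/(\log x)^A$. Without that last ingredient the proposal does not prove the proposition.
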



\section{Notation}
\label{section notation}

We will denote by $\gamma$ the Euler-Mascheroni constant. We also define the following constants:
$$C_1(a,r):=\frac{ \zeta(2)\zeta(3)}{\zeta(6)}\prod_{p\mid a} \left( 1-\frac p {p^2-p+1}\right) \prod_{p\mid r}\left(1+\frac{p-1}{p^2-p+1}\right),$$
$$C_2(a,r):=C_1(a,r)\left(  \gamma - \sum_{p} \frac {\log p} {p^2-p+1}+\sum_{p\mid a} \frac{p^2\log p}{(p-1)(p^2-p+1)}-\sum_{p\mid r} \frac{(p-1)p\log p}{p^2-p+1}\right),$$
$$C_3(a,r):=C_2(a,r)-C_1(a,r),$$
$$C_5(r):=\frac 12\left( \log 2\pi + 1+\gamma + \sum_p \frac{\log p}{p(p-1)} +\sum_{p\mid r} \frac{\log p}{p}\right).$$

Moreover, for $i=1,2,3$, $$C_i(a):=C_i(a,1),$$
and 
$$ C_5:=C_5(1).$$

We denote by $\omega(n)$ the number of prime factors of $n$.

\section{Preliminary lemmas}

We start with some elementary estimates.

\begin{lemma}
\label{additif+multiplicatif}
 Let $f$ be a multiplicative function and $g$ an additive function, that is for $(m,n)=1$, $f(mn)=f(m)f(n)$ and $g(mn)=g(m)+g(n)$ (in particular, $f(1)=1$ and $g(1)=0$). Then for a squarefree integer $r$ we have that
$$ \sum_{d\mid r} f(d)g(d) = \prod_{p'\mid r} (1+f(p')) \sum_{p\mid r} \frac{g(p)f(p)}{1+f(p)}.$$
\end{lemma}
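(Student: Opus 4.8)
The plan is to exploit squarefreeness of $r$, which puts the divisors $d\mid r$ in bijection with subsets $S$ of the set $P:=\{p:p\mid r\}$ of prime divisors of $r$, via $d=\prod_{p\in S}p$. Under this bijection, multiplicativity of $f$ gives $f(d)=\prod_{p\in S}f(p)$ and additivity of $g$ gives $g(d)=\sum_{p\in S}g(p)$, so that
$$\sum_{d\mid r}f(d)g(d)=\sum_{S\subseteq P}\Bigl(\prod_{p\in S}f(p)\Bigr)\Bigl(\sum_{q\in S}g(q)\Bigr).$$
I would then interchange the two sums, fixing first a prime $q\in P$ and summing over those $S$ that contain $q$: writing such $S$ as $\{q\}\cup S'$ with $S'\subseteq P\setminus\{q\}$ factors the inner sum as $f(q)\prod_{p\mid r,\,p\neq q}(1+f(p))$, whence
$$\sum_{d\mid r}f(d)g(d)=\sum_{q\mid r}f(q)g(q)\prod_{\substack{p\mid r\\ p\neq q}}(1+f(p)).$$
Assuming $1+f(p)\neq 0$ for every $p\mid r$ (which holds in all the applications of the lemma in this article), one rewrites $\prod_{p\mid r,\,p\neq q}(1+f(p))=\bigl(\prod_{p\mid r}(1+f(p))\bigr)/(1+f(q))$ and pulls the full product out of the sum, obtaining exactly the claimed identity.

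Alternatively, and this avoids any concern about a vanishing factor $1+f(q)$ until the very last step, one can argue by induction on $\omega(r)$. The base case $r=1$ is the trivial identity $f(1)g(1)=0=1\cdot 0$. For the inductive step, write $r=p r_0$ with $p\nmid r_0$; splitting the divisors of $r$ according to divisibility by $p$ and using $f(pd)=f(p)f(d)$, $g(pd)=g(p)+g(d)$ gives
$$\sum_{d\mid r}f(d)g(d)=(1+f(p))\sum_{d\mid r_0}f(d)g(d)+f(p)g(p)\sum_{d\mid r_0}f(d).$$
One then substitutes the inductive hypothesis for $\sum_{d\mid r_0}f(d)g(d)$ together with the elementary identity $\sum_{d\mid r_0}f(d)=\prod_{p'\mid r_0}(1+f(p'))$, factors out $\prod_{p'\mid r}(1+f(p'))$, and checks that the remaining bracket collapses to $\sum_{p'\mid r}\tfrac{g(p')f(p')}{1+f(p')}$.

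There is no real obstacle here: the statement is an elementary rearrangement of a finite double sum. The only point deserving a word of care is the degenerate case $f(p)=-1$ for some $p\mid r$, in which $\prod_{p'\mid r}(1+f(p'))=0$ and the right-hand side must be interpreted via the pre-division form $\sum_{q\mid r}f(q)g(q)\prod_{p\mid r,\,p\neq q}(1+f(p))$; since the multiplicative functions to which we apply the lemma never take the value $-1$ on primes, I would simply record the hypothesis $1+f(p)\neq 0$ and proceed with the computation above.
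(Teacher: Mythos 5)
Your first argument is exactly the paper's proof: expand $g(d)=\sum_{p\mid d}g(p)$ (equivalently, index divisors by subsets of primes), swap the order of summation, factor $f(d)=f(p)f(d/p)$ to get $f(p)\prod_{p'\mid r/p}(1+f(p'))$, and divide by $1+f(p)$ to pull out the full product. Your explicit caveat $1+f(p)\neq 0$ is a reasonable (and harmless) addition that the paper leaves implicit, and it indeed holds in the paper's application where $1+f(p)=\tfrac{p}{p^{2}-p+1}$.
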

\begin{proof}
We write
\begin{align*}
\sum_{d\mid r} f(d)g(d) &= \sum_{d\mid r} f(d) \sum_{p\mid r} g(p) =  \sum_{p\mid r} g(p) \sum_{\substack{d\mid r : \\ p\mid d}} f(d) = \sum_{p\mid r} g(p) \sum_{\substack{d\mid \frac r{p}}} f(p) f(d) \\
&= \sum_{p\mid r} g(p)f(p) \prod_{p'\mid \frac r{p}} (1+f(p')) =  \sum_{p\mid r} \frac{g(p)f(p)}{1+f(p)} \prod_{p'\mid r} (1+f(p')).
\end{align*}
\end{proof}

\begin{lemma}
\label{sommes de produits singuliers}
Let $a$ and $r$ be coprime integers, with $r$ squarefree. We have for $i=1,2$ that
\begin{equation}
 \frac{C_i(a,r)}r=\sum_{d\mid r} \mu(d)C_i(ad).
\end{equation}
\end{lemma}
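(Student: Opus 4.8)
The plan is to reduce everything to the multiplicative structure of $C_1$ and a single application of Lemma \ref{additif+multiplicatif}. The starting observation is
$$1-\frac{p}{p^2-p+1}=\frac{(p-1)^2}{p^2-p+1},\qquad 1+\frac{p-1}{p^2-p+1}=\frac{p^2}{p^2-p+1},$$
which displays both products in the definition of $C_1(a,r)$ as Euler products. Set $\rho(p):=\frac{(p-1)^2}{p^2-p+1}$. Since $(a,r)=1$ and every $d\mid r$ is squarefree, splitting the product over $p\mid ad$ into the parts $p\mid a$ and $p\mid d$ gives $C_1(ad)=C_1(a)\prod_{p\mid d}\rho(p)$, so
$$\sum_{d\mid r}\mu(d)C_1(ad)=C_1(a)\prod_{p\mid r}\bigl(1-\rho(p)\bigr)=C_1(a)\prod_{p\mid r}\frac{p}{p^2-p+1}.$$
On the other hand $C_1(a,r)=C_1(a)\prod_{p\mid r}\frac{p^2}{p^2-p+1}$, and dividing by $r=\prod_{p\mid r}p$ yields the same product; this proves the case $i=1$.

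For $i=2$ I would separate the $a$-dependent from the $r$-dependent part of $C_2$. Writing $\alpha:=\gamma-\sum_p\frac{\log p}{p^2-p+1}+\sum_{p\mid a}\frac{p^2\log p}{(p-1)(p^2-p+1)}$, $\beta(p):=\frac{(p-1)p\log p}{p^2-p+1}$ and $\delta(p):=\frac{p^2\log p}{(p-1)(p^2-p+1)}$, the definitions give $C_2(a,r)=C_1(a,r)\bigl(\alpha-\sum_{p\mid r}\beta(p)\bigr)$, and, again using $(a,d)=1$ and $d$ squarefree, $C_2(ad)=C_1(ad)\bigl(\alpha+\sum_{p\mid d}\delta(p)\bigr)$. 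Hence
$$\sum_{d\mid r}\mu(d)C_2(ad)=\alpha\sum_{d\mid r}\mu(d)C_1(ad)+\sum_{d\mid r}\mu(d)C_1(ad)\sum_{p\mid d}\delta(p).$$
The first term equals $\alpha\,C_1(a,r)/r$ by the case $i=1$. For the second term I would note that $\mu(d)C_1(ad)=C_1(a)f(d)$ for squarefree $d\mid r$, where $f$ is the multiplicative function with $f(p)=-\rho(p)$, while $d\mapsto\sum_{p\mid d}\delta(p)$ is the additive function $g$ with $g(p)=\delta(p)$; Lemma \ref{additif+multiplicatif} then gives
$$\sum_{d\mid r}\mu(d)C_1(ad)\sum_{p\mid d}\delta(p)=C_1(a)\prod_{p\mid r}\bigl(1-\rho(p)\bigr)\sum_{p\mid r}\frac{-\rho(p)\delta(p)}{1-\rho(p)}.$$
A short local computation shows $1-\rho(p)=\frac{p}{p^2-p+1}$ and $\frac{-\rho(p)\delta(p)}{1-\rho(p)}=-\beta(p)$, and as in the case $i=1$ one has $C_1(a)\prod_{p\mid r}(1-\rho(p))=C_1(a,r)/r$. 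Adding the two terms gives $\frac{C_1(a,r)}{r}\bigl(\alpha-\sum_{p\mid r}\beta(p)\bigr)=\frac{C_2(a,r)}{r}$.

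The argument is entirely elementary. The only point that requires care is the bookkeeping in the case $i=2$: one must isolate the factor of $C_2(ad)$ that depends on $d$, recognize it as an additive function of $d$, and then verify the two rational-function identities $1-\rho(p)=\frac{p}{p^2-p+1}$ and $\frac{-\rho(p)\delta(p)}{1-\rho(p)}=-\beta(p)$ so that Lemma \ref{additif+multiplicatif} applies directly. I do not expect any genuine obstacle beyond this.
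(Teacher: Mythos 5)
Your proof is correct and follows essentially the same route as the paper: both treat $C_1$ via its Euler-product structure to get the case $i=1$, then split $C_2(ad)$ into the $d$-independent factor $\alpha$ and the additive sum over $p\mid d$, and handle the latter with Lemma \ref{additif+multiplicatif} applied to the multiplicative function $f$ with $f(p)=-\rho(p)$, verifying the same local identities (the paper phrases the first term as $C_2(a)\prod_{p\mid r}\frac{p}{p^2-p+1}$, which equals your $\alpha\,C_1(a,r)/r$). No gaps.
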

\begin{proof}
By the definition of $C_1(a)$, we have
$$ \sum_{d\mid r} \mu(d)C_1(ad)= C_1(a) \prod_{p\mid r} \left(1-\left(1-\frac p{p^2-p+1}\right) \right) = \frac{C_1(a,r)}r.$$

Moreover, by defining the multiplicative function $f(d):=\frac{\zeta(6)}{\zeta(2)\zeta(3)}\mu(d)C_1(d)$ we have
\begin{align*}
  \sum_{d\mid r} \mu(d)C_2(ad) &= C_1(a) \sum_{d\mid r} f(d) \left(\gamma - \sum_{p} \frac {\log p} {p^2-p+1}+\sum_{p\mid a} \frac{p^2\log p}{(p-1)(p^2-p+1)}\right) \\ &\hspace{2cm}+ C_1(a) \sum_{d\mid r} f(d)\sum_{p\mid d} \frac{p^2\log p}{(p-1)(p^2-p+1)}
\\ &= C_2(a) \prod_{p\mid r}\frac p {p^2-p+1} +C_1(a) \sum_{d\mid r} f(d)\sum_{p\mid d} \frac{p^2\log p}{(p-1)(p^2-p+1)}.
\end{align*}
Applying Lemma \ref{additif+multiplicatif}, we get that this is
\begin{align*}
&= C_2(a) \prod_{p\mid r}\frac p {p^2-p+1} +C_1(a) \prod_{p'\mid r} (1+f(p')) \sum_{p\mid r} \frac{p^2\log p}{(p-1)(p^2-p+1)} \frac{f(p)}{1+f(p)}
\\ &= C_2(a) \prod_{p\mid r}\frac p {p^2-p+1} - C_1(a) \prod_{p'\mid r} \frac {p'}{(p')^2-p'+1} \sum_{p\mid r} \frac{(p-1)p\log p}{p^2-p+1}
\\ &= C_1(a) \prod_{p\mid r}\frac p {p^2-p+1} \left(  \gamma - \sum_{p} \frac {\log p} {p^2-p+1}+\sum_{p\mid a} \frac{p^2\log p}{(p-1)(p^2-p+1)}-\sum_{p\mid r} \frac{(p-1)p\log p}{p^2-p+1}\right) \\
&=\frac{C_2(a,r)}r.
\end{align*}

\end{proof}

\begin{lemma}
 \label{estimation élémentaires}
Fix $r>0$ and $a\neq 0$ two coprime integers. We have
\begin{align*}
 \sum_{\substack{n\leq M \\ (n,a)=1}} \frac n{\phi(n)} &= C_1(a) M + O(2^{\omega(a)}\log M),  \\
\sum_{\substack{n\leq M\\ (n,a)=1}} \frac1{\phi(n)} &= C_1(a) \log M + C_2(a) + O\left( 2^{\omega(a)}\frac{\log M}{M}\right), \\
 \sum_{\substack{n\leq M\\ (n,a)=1}} \frac {rn}{\phi( rn)} &= C_1(a,r) M + O\left( 3^{\omega(ar)}\log (r'M)\right),  \\
\sum_{\substack{n\leq M \\ (n,a)=1}} \frac 1{\phi(rn)}&=\frac {C_1(a,r)} {r}\log (r'M) +\frac {C_2(a,r)} {r}+O\left(  3^{\omega(ar)}\frac{\log (r'M)}{rM} \right).
\end{align*}
\end{lemma}

\begin{proof}
For the first two estimates, see \cite{fiorilli} or \cite{FGHM}. We now sketch a proof the last estimate. First we assume that $r$ is squarefree, since if it is not we can write
$$  \frac 1{\phi(rn)}=\frac {r'}{r\phi(r'n)}. $$
Then, we use the identity
$$ \sum_{\substack{d\mid r \\ (d,n)=1}}\mu(d) = \begin{cases}
                                                 1 &\text{ if } r\mid n \\
					0& \text{ else}
                                                \end{cases}
$$
to write  
$$\sum_{\substack{n\leq M \\ (n,a)=1}} \frac 1{\phi(rn)}= \sum_{d\mid r}\mu(d) \sum_{\substack{n\leq rM \\ (n,ad)=1}}\frac 1{\phi(n)}.$$
Now, substituting in the $r=1$ estimate, we get that 
$$\sum_{\substack{n\leq M \\ (n,a)=1}} \frac 1{\phi(rn)}  =\log (rM)\sum_{d\mid r} \mu(d)C_1(ad) +\sum_{d\mid r} \mu(d) C_2(ad)+O\left(  3^{\omega(ar)}\frac{\log (rM)}{rM} \right). $$
The result follows by Lemma \ref{sommes de produits singuliers}.

\end{proof}

\begin{lemma}
 \label{somme inverse euler avec poids}
Fix $r>0$ and $a\neq 0$ two coprime integers.

If $\omega(a)\geq 1$,
\begin{equation*}
\sum_{\substack{n\leq M\\ (n,a)=1}} \frac1{\phi(nr)}\left( 1-\frac nM\right) =  \frac{C_1(a,r)}r \log (r'M) +  \frac{C_3(a,r)}r +\frac{\phi(a)}{a}\frac {\Lambda(a)}{2rM} +E(a,r,M).
\end{equation*}

If $a=\pm 1$,
\begin{equation*}
 \sum_{\substack{n\leq M\\ (n,a)=1}} \frac1{\phi(nr)}\left( 1-\frac nM\right) =  \frac{C_1(1,r)}r \log (r')M +  \frac{C_3(1,r)}r +\frac{\log (r'M)}{2rM}  + \frac{C_5}{rM}+E(a,r,M).
\end{equation*}
The error term satisfies
$$  E(a,r,M)\ll \frac{\prod_{p\mid ar} \left( 1+\frac 1{p^{\delta}}\right)}{rM} \left(\frac{a'} M \right)^{\frac{205}{538}-\epsilon},$$
for some $\delta>0$.
\end{lemma}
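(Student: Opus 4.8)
The plan is to evaluate the weighted sum $\sum_{n\le M,\,(n,a)=1}\phi(nr)^{-1}(1-n/M)$ by reducing to the already-established unweighted estimate of Lemma~\ref{estimation �l�mentaires}. Writing $1-n/M = \int_{n/M}^{1}dt = M^{-1}\int_n^M dt$, or equivalently performing partial summation, I would express
$$ \sum_{\substack{n\le M\\(n,a)=1}} \frac1{\phi(nr)}\Bigl(1-\frac nM\Bigr) = \frac1M\int_1^M \Bigl(\sum_{\substack{n\le t\\(n,a)=1}}\frac1{\phi(nr)}\Bigr)\,dt. $$
Substituting the asymptotic $\sum_{n\le t,(n,a)=1}\phi(nr)^{-1} = \frac{C_1(a,r)}r\log(r't) + \frac{C_2(a,r)}r + O(3^{\omega(ar)}\tfrac{\log(r't)}{rt})$ and integrating term by term, the logarithmic term contributes $\frac{C_1(a,r)}r(\log(r'M)-1)$ after using $\frac1M\int_1^M\log t\,dt = \log M - 1 + O(\tfrac{\log M}{M})$, which combines with $\frac{C_2(a,r)}r$ to give $\frac{C_1(a,r)}r\log(r'M) + \frac{C_3(a,r)}r$ since $C_3 = C_2 - C_1$. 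The error term from the $O$-term integrates to $O(3^{\omega(ar)}\tfrac{\log(r'M)}{rM})$, which is far coarser than what is claimed, so this crude approach only gives the shape of the main terms, not the precise secondary terms $\frac{\phi(a)}{a}\frac{\Lambda(a)}{2rM}$ or $\frac{\log(r'M)}{2rM}+\frac{C_5}{rM}$, nor the sharp error exponent $\frac{205}{538}-\epsilon$.

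To get those one needs the genuinely sharp version of the unweighted sum. So the real plan is: first establish a refined asymptotic for $\sum_{n\le t,(n,a)=1}\phi(nr)^{-1}$ with an error term of the form $O\bigl(\frac{\prod_{p\mid ar}(1+p^{-\delta})}{rt}(a'/t)^{205/538-\epsilon}\bigr)$ plus an explicit secondary main term of size $\asymp 1/(rt)$, then feed this into the integral representation above. The sharp error exponent $205/538$ must come from a nontrivial input — presumably a bound on $\sum_{n\le t}\phi(n)^{-1}$ with power-saving error coming from the analytic behaviour of the associated Dirichlet series $\sum_n \phi(n)^{-n^{-s}}$ (which has a double pole structure after factoring out $\zeta(s+1)$), shifted against a zero-free or subconvexity-type estimate; the exponent $205/538$ is reminiscent of a $\theta$-type exponent in such a contour shift. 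I would isolate the arithmetic of the $r$- and $a$-dependence by the same squarefree reduction and Möbius inversion used in Lemma~\ref{estimation �l�mentaires} (writing $\phi(rn)^{-1} = r'r^{-1}\phi(r'n)^{-1}$ and $\sum_{d\mid r,(d,n)=1}\mu(d) = \mathbf 1_{r\mid n}$), so that everything is expressed through $\sum_{n\le rM,(n,ad)=1}\phi(n)^{-1}$, and then track how the secondary term of that sum depends on $\Lambda(ad)$ — only $d$ with $ad$ a prime power contribute, which is exactly why $\Lambda(a)$ appears in the $\omega(a)\ge1$ case and why a cleaner constant $C_5$ plus a $\log(r'M)/(2rM)$ term appears when $a=\pm1$ (there the prime-power $d\mid r$ all contribute a $\Lambda(d)=\log p$, whose sum over $p\mid r$ is absorbed into $C_5(r)$).

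Concretely I would: (1) prove the sharp unweighted estimate $\sum_{n\le y,(n,b)=1}\phi(n)^{-1} = C_1(b)\log y + C_2(b) + \frac{\phi(b)}{b}\frac{\Lambda(b)}{2y} + \frac{[b=1]}{2y}(\log y + \text{const}) + O(\frac{\prod_{p\mid b}(1+p^{-\delta})}{y}(b'/y)^{205/538-\epsilon})$, citing or reproving the analytic estimate behind it (this is the step I expect to be the main obstacle, since it requires the power-saving error — everything else is bookkeeping); (2) apply the Möbius/squarefree reduction to pass from modulus $b=ad$ back to the pair $(a,r)$, summing the secondary terms and checking they collapse to $C_3(a,r)$, $\frac{\phi(a)}{a}\frac{\Lambda(a)}{2rM}$, and in the $a=\pm1$ case to $\frac{\log(r'M)}{2rM}+\frac{C_5}{rM}$ via $C_5(r) = C_5 + \frac12\sum_{p\mid r}\frac{\log p}p$ and Lemma~\ref{additif+multiplicatif}; (3) insert into $\frac1M\int_1^M(\cdots)\,dt$ and integrate, using $\frac1M\int_1^M\log t\,dt = \log M - 1 + O(\log M/M)$, $\frac1M\int_1^M\frac{dt}t = \frac{\log M}M$, and $\frac1M\int_1^M t^{-1}(b'/t)^{205/538-\epsilon}dt \ll \frac1M(b'/M)^{205/538-\epsilon}$, so that the weighted error inherits the same exponent; (4) collect terms, noting the $-1$ from $\log M - 1$ converts $C_2$ into $C_3$ exactly as in the unweighted-to-weighted passage, and verify the $\prod_{p\mid ar}(1+p^{-\delta})$ factor survives the Möbius sum (it does, since $\sum_{d\mid r}\prod_{p\mid ad}(1+p^{-\delta}) \ll \prod_{p\mid ar}(1+p^{-\delta})$ up to adjusting $\delta$). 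The main obstacle is genuinely step (1): obtaining the power-saving error $ (a'/M)^{205/538-\epsilon}$ uniformly in $a$ with the stated Euler-product dependence, which is where all the arithmetic-analytic content sits; the passage to the weighted sum and the combinatorial identities for the constants are then routine given the lemmas already proved.
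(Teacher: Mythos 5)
Your Möbius/squarefree bookkeeping (your steps (2) and (4)), and your explanation of where $\Lambda(a)$, $C_5$ and the factor $\prod_{p\mid ar}(1+p^{-\delta})$ come from, coincide with the paper's argument, which performs exactly this $\sum_{d\mid r}\mu(d)$ reduction --- but applied directly to the weighted sums $\sum_{n\le rM,\,(n,ad)=1}\phi(n)^{-1}\bigl(1-\tfrac n{rM}\bigr)$, whose $r=1$ case is quoted from Lemma 6.9 of \cite{fiorilli} (with one inequality there replaced by a bound on $\mathfrak S_{a_M}$ to get the stated uniformity in $a$, Huxley's subconvexity bound producing the exponent $\tfrac{205}{538}$). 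The genuine gap is your step (1): the pointwise asymptotic you want for the \emph{unweighted} sum, with smooth secondary terms of size $1/y$ and error $O\bigl(y^{-1}(b'/y)^{205/538-\epsilon}\bigr)$, is false, so there is nothing there to ``cite or reprove''. Indeed $\sum_{n\le y,\,(n,b)=1}1/\phi(n)$ jumps by $1/\phi(n_0)\ge 1/n_0$ each time $y$ crosses an integer $n_0$ coprime to $b$, while every main term you propose is continuous in $y$; hence the remainder is $\ge \tfrac1{2\phi(n_0)}\gg 1/y$ just before or just after each such jump, and no error term $o(1/y)$ --- let alone a power saving beyond $1/y$ --- can hold for the unweighted sum. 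The power-saving expansion exists only for the Ces\`aro-smoothed sum: the weight $1-n/M$ corresponds to the Mellin kernel $M^s/(s(s+1))$, absolutely integrable on left-shifted contours where the subconvexity estimate for $\zeta$ can be applied, and that is precisely the content of the cited Lemma 6.9.

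Consequently your integral representation $\frac1M\int_1^M(\cdots)\,dt$, though correct, does not let you outsource the analytic work: to recover $\frac{\phi(a)}{a}\frac{\Lambda(a)}{2rM}$, $\frac{\log(r'M)}{2rM}+\frac{C_5}{rM}$ and the error $(a'/M)^{205/538-\epsilon}$ you would have to carry the oscillating sawtooth-type part of the unweighted sum through the integration and average it, which is the smoothed analysis in disguise. The repair is what the paper does: take the smoothed $r=1$ estimate from \cite{fiorilli} (with the modified singular-series bound), then run your Möbius reduction on the weighted sums and collapse the constants via Lemma \ref{sommes de produits singuliers} and Lemma \ref{additif+multiplicatif}; that part of your outline, including the verification that $\sum_{d\mid r}d^{\frac{205}{538}-\epsilon}\prod_{p\mid d}(1+p^{-\delta})$ retains the Euler-product shape after enlarging $\delta$, is sound and matches the paper's computation.
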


\begin{proof}
For the proof in the case $r=1$, we refer the reader to Lemma 6.9 of \cite{fiorilli}. In the proof, we replace $(40)$ by the bound 
$$\mathfrak S_{a_M}(s+1) \ll a_M^{-1-\sigma} \prod_{p\mid a_M} \left(1+\frac 1{p^{\delta}} \right),$$
which will yield the improved error term
$$  E(a,1,M)\ll \frac{\prod_{p\mid a} \left( 1+\frac 1{p^{\delta}}\right)}{M} \left(\frac{a'} M \right)^{\frac{205}{538}-\epsilon}.$$
Note that the exponent $\frac{205}{538}$ comes from Huxley's subconvexity bound on $\zeta(s)$ \cite{huxley}.

For the general case, we proceed as in the preceding lemma. We can again assume that $r$ is squarefree, and write
$$ \sum_{\substack{n\leq M\\ (n,a)=1}} \frac1{\phi(nr)}\left( 1-\frac nM\right) = \sum_{d\mid r} \mu(d)  \sum_{\substack{n\leq rM \\ (n,ad)=1}} \frac 1{\phi(n)} \left(1-\frac n{rM} \right), $$
in which we substitute the $r=1$ estimate. If $\omega(a)\geq 2$, then $\omega(ad)\geq 2$ for all $d\mid r$, so we get
 \begin{align*}
\sum_{\substack{n\leq M \\ (n,a)=1}} \frac 1{\phi(rn)}\left( 1-\frac nM\right) 
&=\sum_{d\mid r} \mu(d) \left( C_1(ad) \log(rM) + C_3(ad) + E(ad,1,rM)\right)
\\ &= C_1(a,r) \log(rM) + C_3(a,r) + E(a,r,M)
\end{align*}
by Lemma \ref{sommes de produits singuliers}. Here,
\begin{align*} E(a,r,M)  &\ll \sum_{d\mid r }\frac{\prod_{p\mid ad} \left( 1+\frac 1{p^{\delta}}\right)}{rM} \left(\frac{a'd} {rM} \right)^{\frac{205}{538}-\epsilon}  \\
&= \frac{\prod_{p\mid a} \left( 1+\frac 1{p^{\delta}}\right)}{rM} \left(\frac{a'} {rM} \right)^{\frac{205}{538}-\epsilon} \sum_{d\mid r} d^{\frac{205}{538}-\epsilon} \prod_{p\mid d} \left( 1+\frac 1{p^{\delta}}\right) \\
&= \frac{\prod_{p\mid a} \left( 1+\frac 1{p^{\delta}}\right)}{rM} \left(\frac{a'} {rM} \right)^{\frac{205}{538}-\epsilon}  \prod_{p\mid r} \left( 1+p^{\frac{205}{538}-\epsilon}\left(1+\frac 1{p^{\delta}} \right)\right) \\
&\ll \frac{\prod_{p\mid ar} \left( 1+\frac 1{p^{\delta}}\right)}{rM} \left(\frac{a'} {M} \right)^{\frac{205}{538}-\epsilon},
\end{align*}
where we might have to change the value of $\delta>0$.

If $\omega(a)=1$, then $\omega(ad)\geq 1$ for all $d\mid r$, so we get
 \begin{align*}
\sum_{\substack{n\leq M \\ (n,a)=1}} \frac 1{\phi(rn)}\left( 1-\frac nM\right) 
&=\sum_{d\mid r} \mu(d) \left( C_1(ad) \log(rM) + C_3(ad) + \frac{\phi(ad)}{ad}\frac {\Lambda(ad)}{2rM}+E(ad,1,rM)\right)
\\ &=\sum_{d\mid r} \mu(d) \left( C_1(ad) \log(rM) + C_3(ad)\right) +  \frac{\phi(a)}{a}\frac {\Lambda(a)}{2rM}+ E(a,r,M)
\\&= C_1(a,r) \log(rM) + C_3(a,r)+  \frac{\phi(a)}{a}\frac {\Lambda(a)}{2rM}+ E(a,r,M).
\end{align*}

If $a=\pm 1$, then we get

 \begin{align*}
\sum_{\substack{n\leq M \\ (n,a)=1}} \frac 1{\phi(rn)}\left( 1-\frac nM\right) & =\sum_{\substack{d\mid r}} \mu(d) ( C_1(ad) \log(rM) + C_3(ad)+E(ad,1,rM)) \\
& \hspace{1cm} -\sum_{p\mid r} \frac{\phi(p)}{p}\frac{\Lambda(p)}{2rM} +  \frac{\log (rM)}{2rM}  + \frac{C_5}{rM} \\
&= C_1(a,r)\log (rM)  + C_2(a,r)+\frac{\log M}{2rM}  + \frac{C_5(r)}{rM}+E(a,r,M).
\end{align*}

\end{proof}

\section{Further results and proofs}


\begin{proposition}
 \label{proposition M entier}
Fix two positive real numbers $\lambda<\frac 1{10}$ and $D$. let $M=M(r,x)$ be an integer such that $1\leq M(r,x) \leq(\log x)^D$. Then for $R=R(x)\leq x^{\lambda}$ we have
\begin{multline}
\sum_{\substack{R/2<r\leq R \\ (r,a)=1}}\Bigg|\sum_{\substack{q\leq \frac x {rM}  \\(q,a)=1}} \left( \psi(x;qr,a)-\Lambda(a)-\frac{x}{\phi(qr)} \right) \\- x\Bigg( \frac{C_1(a,r)}r\log (r'M)+\frac{C_3(a,r)}r -\sum_{\substack{s\leq M\\(s,a)=1}} \frac 1 {\phi(rs)} \left( 1-\frac sM\right)\Bigg)\Bigg|=O_{a,A,D,\lambda}\left( \frac {x}{\log^A x}\right).
\label{equation proposition M entier}
\end{multline}
We can remove the condition of $M$ being an integer at the cost of adding the error term $O\left(x\frac{\log\log M}{M^2}\right)$. 
\end{proposition}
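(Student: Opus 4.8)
The plan is to truncate the $q$-sum at the level where Theorem~\ref{BFI} takes over, and to recover the asserted main term from the complementary range by Hooley's form of the divisor switching trick. I would treat $a\geq 1$; the case $a\leq -1$ is handled identically. Fix $B=B(A,D)$ large, at least the constant supplied by Theorem~\ref{BFI} for the pair $(A,\lambda)$ and larger than $D$, and put $Q_0:=x/(R(\log x)^{B})$, so that $Q_0R\leq x/(\log x)^{B}$. Splitting $\sum_{q\leq x/(rM),(q,a)=1}$ as $\sum_{q\leq Q_0,(q,a)=1}+\sum_{Q_0<q\leq x/(rM),(q,a)=1}$, the first part contributes, after summation over $r\leq R$, at most $\ll_{a,A,\lambda}x/(\log x)^{A}$ by Theorem~\ref{BFI}, so it is enough to evaluate
$$S_r:=\sum_{\substack{Q_0<q\leq x/(rM)\\(q,a)=1}}\Bigl(\psi(x;qr,a)-\Lambda(a)-\frac{x}{\phi(qr)}\Bigr).$$

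Next I would apply the divisor switch. Because $Q_0\gg|a|$, in this range $\psi(x;qr,a)-\Lambda(a)=\sum_{a<n\leq x,\,qr\mid n-a}\Lambda(n)$, and $qr\mid n-a$ is the same as $r\mid n-a$ together with $q\mid\nu$, $\nu:=(n-a)/r$. Writing $e:=\nu/q$ for the complementary divisor, the constraints $Q_0<q\leq x/(rM)$, $(q,a)=1$ turn into $(n-a)M/x\leq e<(n-a)/(rQ_0)$, $(\nu/e,a)=1$; in particular $1\leq e<2(\log x)^{B}$, so the new modulus satisfies $re\ll x^{\lambda}(\log x)^{B}\ll x^{1/2-\epsilon}$, which is the crucial gain. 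After interchanging the summations and discarding, at an admissible cost, both the cofactor coprimality condition (it only removes values of $\Lambda$ supported on prime powers of primes dividing $a$, contributing $\ll_{a}x^{\lambda}(\log x)^{B+2}$ in total) and the terms with $(e,a)>1$ (for which $\psi(\cdot;re,a)=O_{a}(\log x)$, contributing $\ll_{a}x^{\lambda}(\log x)^{B+1}$), I would arrive at
$$\sum_{\substack{Q_0<q\leq x/(rM)\\(q,a)=1}}(\psi(x;qr,a)-\Lambda(a))=\sum_{\substack{e\geq 1\\(e,a)=1}}\Bigl(\psi(\min(x,a+ex/M);re,a)-\psi(a+rQ_0e;re,a)\Bigr)+O\Bigl(\tfrac{x}{(\log x)^{A}}\Bigr),$$
where the $e$-sum is supported on $e<\mathcal{E}:=(x-a)/(rQ_0)$.

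Now since $(re,a)=1$ for $(e,a)=1$, I would write $\psi(y;re,a)=y/\phi(re)+(\psi(y;re,a)-y/\phi(re))$. Fixing $e$ and summing the error over $r\in(R/2,R]$, one gets (dropping the restriction $e\mid d$) at most $\sum_{d\leq Re}\max_{y\leq x}\max_{(b,d)=1}|\psi(y;d,b)-y/\phi(d)|\ll x/(\log x)^{A+B}$ by the Bombieri--Vinogradov theorem (in the form with a maximum over the upper endpoint), as $Re\ll x^{1/2-\epsilon}$; summing over the $\ll(\log x)^{B}$ values of $e$ leaves $\ll x/(\log x)^{A}$. What is left is an elementary expression for $S_r$, which after writing $\min(x,a+ex/M)=x-(x-a)\max\{0,1-ex/(M(x-a))\}$ splits into $(x-a)\sum_{e<\mathcal{E},(e,a)=1}\phi(re)^{-1}$, $-rQ_0\sum_{e<\mathcal{E},(e,a)=1}e\,\phi(re)^{-1}$, $-(x-a)\sum_{e<\mathcal{E},(e,a)=1}\max\{0,1-ex/(M(x-a))\}\phi(re)^{-1}$ and $-\sum_{Q_0<q\leq x/(rM),(q,a)=1}x/\phi(qr)$. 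Using the elementary asymptotics $\sum_{n\leq M,(n,a)=1}\phi(rn)^{-1}=\tfrac{C_1(a,r)}{r}\log(r'M)+\tfrac{C_2(a,r)}{r}+O(3^{\omega(ar)}\log(r'M)/(rM))$ and $\sum_{n\leq M,(n,a)=1}n\,\phi(rn)^{-1}=\tfrac{C_1(a,r)}{r}M+O(3^{\omega(ar)}\log(r'M)/r)$ recalled above, together with the identities $Q_0\mathcal{E}=(x-a)/r$ and $x/(rMQ_0)=\tfrac{x}{x-a}\cdot\tfrac{\mathcal{E}}{M}$, these four sums are, respectively, $\tfrac{xC_1(a,r)}{r}\log(r'\mathcal{E})+\tfrac{xC_2(a,r)}{r}$, $-\tfrac{xC_1(a,r)}{r}$, $-x\sum_{s\leq M,(s,a)=1}\phi(rs)^{-1}(1-s/M)$ and $-\tfrac{xC_1(a,r)}{r}\log(\mathcal{E}/M)$, up to terms which sum over $R/2<r\leq R$ to $\ll x^{\lambda}(\log x)^{O(1)}+x/(\log x)^{B-O(1)}$. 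The $\log\mathcal{E}$ contributions cancel and, via $C_3=C_2-C_1$, one obtains
$$S_r=x\Bigl(\frac{C_1(a,r)}{r}\log(r'M)+\frac{C_3(a,r)}{r}-\sum_{\substack{s\leq M\\(s,a)=1}}\frac{1}{\phi(rs)}\Bigl(1-\frac sM\Bigr)\Bigr)+(\text{error}),\qquad \sum_{R/2<r\leq R}|(\text{error})|\ll_{a,A,D,\lambda}\frac{x}{(\log x)^{A}},$$
provided $B$ was taken large enough ($x^{\lambda}(\log x)^{O(1)}\ll x/(\log x)^{A}$ since $\lambda<1$). Combined with the first paragraph this is \eqref{equation proposition M entier}. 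For non-integer $M$ I would run the integer case with $\lfloor M\rfloor$ in place of $M$: the two left-hand sides then differ by $\sum_{x/(rM)<q\leq x/(r\lfloor M\rfloor),(q,a)=1}(\psi(x;qr,a)-\Lambda(a)-x/\phi(qr))$, a sum over a $q$-strip of length $\ll x/(rM^{2})$, and one more divisor switch --- now confining $e$ to an interval of length $<1$ near $\lfloor M\rfloor$ --- matches it against the corresponding change in the main term up to $O(x\log\log M/M^{2})$.

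The difficulty I anticipate is bookkeeping rather than conceptual: setting up the divisor switch so that the switched moduli $re$ stay comfortably below $x^{1/2}$ (the only place $\lambda<\tfrac12$ --- a fortiori $\lambda<\tfrac1{10}$ --- is used in this proposition), and then verifying that all the auxiliary errors (the $3^{\omega(ar)}$-type tails of the $\sum\phi(\cdot)^{-1}$ estimates, the discrepancies between $x-a$ and $x$ and between $M(x-a)/x$ and $M$, the coprimality corrections, the $n<a$ terms) stay below $x/(\log x)^{A}$ after summation over $r$ --- which is precisely what dictates choosing $B$ in terms of both $A$ and $D$. Beyond Theorem~\ref{BFI} the sole analytic input is the classical Bombieri--Vinogradov theorem.
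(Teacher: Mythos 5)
Your proposal is correct and follows essentially the same route as the paper: Theorem \ref{BFI} disposes of the range $q\leq Q_0\approx x/(R(\log x)^B)$, Hooley's divisor switch converts the complementary range into sums over small moduli $re\ll x^{\lambda}(\log x)^{B}$ handled by the classical Bombieri--Vinogradov theorem, and the main term is recovered from the elementary $\phi$-sums, with the weighted sum $\sum_{s\leq M}\phi(rs)^{-1}(1-s/M)$ matched directly (which, as in the paper, is exactly where the integrality of $M$ and the $O(x\log\log M/M^{2})$ correction enter). The only differences are cosmetic: you switch the interval $(Q_0,x/(rM)]$ in one piece (hence the $\min(x,a+ex/M)$ endpoints) where the paper writes it as a difference of two ranges with upper endpoint $x/r$, and you keep $\psi$ rather than passing to $\theta$.
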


\begin{proof}


The proof follows closely that of Proposition 7.1 of \cite{fiorilli}. We start by splitting the sum over $q$ as follows:

$$\sum_{\substack{q\leq \frac x{rM}  \\(q,a)=1}} = \sum_{\substack{q \leq \frac x{RL}  \\(q,a)=1}}+\sum_{\substack{\frac x{RL}< q \leq \frac xr\\(q,a)=1}}-\sum_{\substack{\frac x{rM}< q \leq  \frac xr\\(q,a)=1}}. $$
We use Theorem \ref{BFI} to bound the first of these sums by taking $L:=(\log x)^{A+B+D+4}$, with $B=B(A)$ coming from this theorem:
\begin{equation*}\sum_{\substack{ R/2<r\leq R \\ (r,a)=1}}\Bigg|\sum_{\substack{q \leq \frac x{RL}  \\(q,a)=1}} \left( \psi(x;qr,a)-\Lambda(a)-\frac{x}{\phi(qr)} \right)\Bigg| \ll_{a,A,D,\lambda} \frac x{(\log x)^A}. \end{equation*}

We study the two remaining sums in the same way, by writing
$$\sum_{\substack{  \frac x{rP} <q \leq \frac x{r}  \\(q,a)=1}} \left( \psi(x;qr,a)-\Lambda(a)-\frac{x}{\phi(qr)} \right)= \sum_{\substack{\frac x{rP} <q \leq \frac x{r}  \\(q,a)=1}} \sum_{\substack{|a|<n\leq x \\ n\equiv a \bmod qr}}\Lambda(n) - x\sum_{\substack{  \frac x{rP} <q \leq \frac x{r}  \\(q,a)=1}} \frac 1{\phi(qr)}, $$
where we will take $P\leq 2L$ to be either $M$ or $\frac{RL}r$. The last term on the right is easily treated using Lemma \ref{estimation élémentaires}. As for the first term, we can remove the prime powers at the cost of a negligible error term, and end up with the following sum:
$$\sum_{\substack{ \frac x{rP} <q \leq \frac x{r} \\(q,a)=1}} \sum_{\substack{|a|<p\leq x \\ p\equiv a \bmod qr}}\log p.$$
We will now use Hooley's variant of the divisor switching technique (see \cite{hooley}). Writing $p=a+qrs$, we see that we should sum over $s$ rather than over $q$, since the bound $\frac x{rP} <q$ forces $s$ to be very small. We get that the sum is, up to an error $\ll(\log x)^2$, equal to
\begin{align*} \sum_{\substack{1\leq s < P-\frac{aP}{x} \\ (s,a)=1}} \sum_{\substack{\frac{sx}{P}+a\leq p \leq x\\p\equiv a \bmod sr}} \log p &=\sum_{\substack{1\leq s < P-\frac{aP}{x} \\ (s,a)=1}} \left( \theta(x;sr,a)-\theta\left(\frac{sx}{P}+a;sr,a\right)\right)  \\
&=\sum_{\substack{1\leq s < P-\frac{aP}{x} \\ (s,a)=1}} \frac x{\phi(sr)}\left(1-\frac sP \right) +E(r,a),
\end{align*}

where, by the Bombieri-Vinogradov theorem,
\begin{align*} \sum_{\substack{R/2<r\leq R \\ (r,a)=1}} |E(r,a)| &\leq \sum_{\substack{s\leq 2L \\ (s,a)=1}} \sum_{\substack{r\leq R \\ (r,a)=1}}  \max_{y\leq x} \left| \theta(y;sr,a)-\frac y{\phi(sr)} \right|+O_{a,A}\left(\frac x{(\log x)^A}\right)  \\
 &\leq 2L \sum_{\substack{q\leq 2RL \\ (q,a)=1}} \max_{y\leq x} \left| \theta(y;q,a)-\frac y{\phi(q)} \right|+O_{a,A}\left(\frac x{(\log x)^A}\right) \ll_A \frac x{(\log x)^A}. 
\end{align*}

Putting all this together and using the triangle inequality, we get that the left hand side of \eqref{equation proposition M entier} is
\begin{multline} \leq \sum_{\substack{R/2<r\leq R \\ (r,a)=1}} \Bigg| \sum_{\substack{s \leq \frac{RL}r \\ (s,a)=1}} \frac x{\phi(sr)}\left(1-\frac s{RL/r} \right)-\sum_{\substack{s \leq M \\ (s,a)=1}} \frac x{\phi(sr)}\left(1-\frac sM \right) - \sum_{\substack{  \frac x{RL} <q \leq \frac x{rM}  \\(q,a)=1}} \frac x{\phi(qr)} \\
- x\Bigg( \frac{C_1(a,r)}r\log (r'M)+\frac{C_3(a,r)}r -\sum_{\substack{s\leq M\\(s,a)=1}} \frac 1 {\phi(sr)} \left( 1-\frac sM\right)\Bigg)\Bigg|+O_{a,A,D,\lambda}\left(\frac x{(\log x)^A}\right),
\label{resume a borner}
 \end{multline}
since $M$ is an integer. If $M$ is not an integer, we have to add an error term of size
$$ \ll x\sum_{R/2<r\leq R} \frac{\log\log M}{\phi(r)M^2} \ll \frac{x\log\log M}{M^2}.$$ 
(We already used the fact that $x\sum_{R/2<r\leq R} \frac{\log\log (RL/r)}{\phi(r)(RL/r)^2} \ll \frac{x\log\log L}{L^2}$ in \eqref{resume a borner}.)
Applying the triangle inequality once more gives that \eqref{resume a borner} is
\begin{align*} &\leq x\sum_{\substack{R/2<r\leq R \\ (r,a)=1}} \Bigg| \sum_{\substack{s \leq \frac{RL}r \\ (s,a)=1}} \frac 1{\phi(sr)}\left(1-\frac s{RL/r} \right)-\frac{C_1(a,r)}r \log\left( \frac{r'RL}r\right) -\frac{C_3(a,r)}r \Bigg|  \\
&\hspace{1cm}+ x\sum_{\substack{R/2<r\leq R \\ (r,a)=1}}\Bigg|\sum_{\substack{  \frac x{RL} <q \leq \frac x{rM}  \\(q,a)=1}} \frac 1{\phi(qr)}- \frac{C_1(a,r)}r \log \left(\frac{RL}{rM}\right)\Bigg|+ O_{a,A,D,\lambda}\left(\frac x{(\log x)^A}\right), 
\end{align*}
which by Lemma \ref{estimation élémentaires} is
\begin{align*}
&\ll_{a,A,D,\lambda} x \sum_{\substack{R/2<r\leq R \\ (r,a)=1}} \frac{3^{\omega(r) }\log(RL)}{RL}+x \sum_{\substack{R/2<r\leq R \\ (r,a)=1}} \frac{3^{\omega(r)}\log(x/RL)}{x/RL}+\frac x{(\log x)^A} \\
& \ll \frac {x(\log R)^2}{RL}+\frac x{(\log x)^A} \\
&\ll \frac x{(\log x)^A}.
 \end{align*}

%

\end{proof}

\begin{proof}[Proof of Theorem \ref{B-V pour titchmarsh thm}]
 Taking $M=1$ in Proposition \ref{proposition M entier} and applying Lemma \ref{estimation élémentaires} and the triangle inequality, we get
$$ \sum_{\substack{\frac R2<r\leq R \\ (r,a)=1}} \left|\sum_{\substack{q\leq \frac xr \\ (q,a)=1}} (\psi(x;qr,a)-\Lambda(a))- \frac xr \left(  C_1(a,r)\log\left(\frac{(r')^2x}{er}\right) +2C_2(a,r) \right)  \right| \ll_{a,A,\lambda} \frac {x}{\log^{A+1} x}.$$
Taking dyadic intervals, one can easily use this to show that the whole sum over $r\leq R$ is $\ll_{a,A} \frac {x}{\log^{A} x} $. The result follows by exchanging the order of summation:
\begin{align*}
\sum_{\substack{q\leq \frac xr \\ (q,a)=1}} \sum_{\substack{|a|<n\leq x \\ n\equiv a \bmod qr}} \Lambda(n)  &= \sum_{\substack{|a|<n\leq x \\ n\equiv a \bmod r}} \Lambda(n) \sum_{\substack{q\leq \frac xr: \\ qr \mid n-a} } 1 \\
&=\sum_{\substack{|a|<n\leq x \\ n\equiv a \bmod r}} \Lambda(n)\tau\left( \frac{n-a}{r} \right).
\end{align*}
(the last equality is exact if $a>0$, else we have to add a neglegible error term.)

\end{proof}

\begin{proof}[Proof of Theorem \ref{main result}]
For the first result, we take $M(r,x):=M(x)$ in Proposition \ref{proposition M entier}. By Lemma \ref{somme inverse euler avec poids}, we have that 
\begin{multline}\sum_{\substack{\frac R2<r\leq R \\ (r,a)=1}} \left| \frac{\phi(a)}{a}\frac {x}{rM} \mu(a,r,M) - x\Bigg( \frac{C_1(a,r)}r\log (r'M)+\frac{C_3(a,r)}r -\sum_{\substack{s\leq M\\(s,a)=1}} \frac 1 {\phi(rs)} \left( 1-\frac sM\right)\Bigg) \right|  \\
\leq x\sum_{\substack{\frac R2<r\leq R \\ (r,a)=1}} |E(a,r,M)| \ll_a \frac x {M^{\frac{205}{538}-\epsilon}}\sum_{\substack{\frac R2<r\leq R \\ (r,a)=1}} \frac{\prod_{p\mid r}\left( 1+\frac 1{p^{\delta}}\right)}{r}  \ll \frac x {M^{\frac{205}{538}-\epsilon}},
\end{multline}
hence the result follows by the triangle inequality. 

The second result is a bit more delicate, since we have the full range of $r$, and the innermost sum depends on $R$. For this reason, we need to go back to the proof of Proposition \ref{proposition M entier}. We first split the sum over $r$ into the two intervals $r\leq R/(\log x)^{B}$ and $R/(\log x)^{B}<r \leq R$, where we take $B=B(2A)$ as in Theorem \ref{BFI}, and we can assume that $B(2A)\geq 2A$. The first part of the sum is treated using this Theorem:
\begin{multline*}  \sum_{\substack{r \leq \frac R{(\log x)^B} \\ (r,a)=1}} \left| \sum_{\substack{q\leq \frac x{RM} \\ (q,a)=1}} \left( \psi(x;qr,a)-\Lambda(a)-\frac x{\phi(qr)}\right) -\frac{\phi(a)}{a}\frac {x}{RM} \mu(a,r,M)\right|  \\
\ll_{a,A,\lambda} \frac x{(\log x)^{2A}}+\frac{x}{(\log x)^B}, 
\end{multline*}
since $\frac R{(\log x)^B} \cdot \frac x{RM} = \frac x{M(\log x)^B}\leq \frac x{(\log x)^{B}}$. 
%
For the rest of the sum, we argue as in the proof of Proposition \ref{proposition M entier}. We split the sum over $q$ as follows:
$$\sum_{\substack{q\leq \frac x{RM}  \\(q,a)=1}} = \sum_{\substack{q \leq \frac x{RL}  \\(q,a)=1}}+\sum_{\substack{\frac x{RL}< q \leq \frac xr\\(q,a)=1}}-\sum_{\substack{\frac x{RM}< q \leq  \frac xr\\(q,a)=1}}. $$
Taking $P$ to be either $\frac RrL$ or $\frac Rr M$, we have that $P\leq L (\log x)^B$ (instead of $P\leq 2L$). The rest of the proof goes through, and we get that 
\begin{multline}
\sum_{\substack{\frac RL<r\leq R \\ (r,a)=1}}\Bigg|\sum_{\substack{q\leq \frac x {RM}  \\(q,a)=1}} \left( \psi(x;qr,a)-\Lambda(a)-\frac{x}{\phi(qr)} \right) - x\Bigg( \frac{C_1(a,r)}r\log \left(r'RM/r\right)+\frac{C_3(a,r)}r  \\
-\sum_{\substack{s\leq RM/r\\(s,a)=1}} \frac 1 {\phi(rs)} \left( 1-\frac s{RM/r}\right)\Bigg)\Bigg|\ll_{a,A,D,\lambda} \frac {x}{(\log x)^{2A}}+E_2(x,M),
\label{equation proposition M entier pour R}
\end{multline}
where $E_2(x,M)$ is the error coming from the fact that $\frac Rr M$ is not an integer, which is
$$ \ll x\sum_{\frac RL <r\leq R}\frac {\log\log (RM/r)}{\phi(r) RM/r} \frac 1{RM/r} \ll  \frac {x}{(RM)^2}\sum_{\frac RL <r\leq R}\frac {r^2\log\log (RM/r)}{\phi(r)} \ll \frac{x \log\log M}{M^2}. $$
We finish the proof by applying Lemma \ref{somme inverse euler avec poids} and the triangle inequality.
\end{proof}

\begin{proof}[Proof of Corollary \ref{corollaire limitations}]
By the triangle inequality we have
\begin{multline*}
 \sum_{\substack{r\leq R\\ (r,a)=1}} \left|\frac{\phi(a)}a\frac{x}{RM}\mu(a,r,RM/r) \right|\leq  \sum_{\substack{r\leq R \\ (r,a)=1}} \Bigg|\sum_{\substack{q\leq \frac x{RM} \\ (q,a)=1}} \left( \psi(x;qr,a)-\Lambda(a)-\frac x{\phi(qr)}\right) \\
 -\frac{\phi(a)}a\frac{x}{RM}\mu(a,r,RM/r) \Bigg|
 +\sum_{\substack{r\leq R \\ (r,a)=1}} \left|\sum_{\substack{q\leq \frac x{RM} \\ (q,a)=1}} \left( \psi(x;qr,a)-\Lambda(a)-\frac x{\phi(qr)}\right) \right|,
\end{multline*}
hence by Theorem \ref{main result} we get the lower bound
\begin{multline*} \sum_{\substack{r\leq R \\ (r,a)=1}} \left|\sum_{\substack{q\leq \frac x{RM} \\ (q,a)=1}} \left( \psi(x;qr,a)-\Lambda(a)-\frac x{\phi(qr)}\right) \right| \geq \frac{\phi(a)}a\frac{x}{RM}\sum_{\substack{r\leq R \\ (r,a)=1}} |\mu(a,r,RM/r)| \\
 - O_{\epsilon}\left(\frac {x}{M^{\frac{743}{538}-\epsilon}} \right),
\end{multline*}

since for $M$ large enough, $\mu(a,r,RM/r)\leq 0$. For the upper bound, we write
\begin{align*} \sum_{\substack{r\leq R \\ (r,a)=1}} &\left|\sum_{\substack{q\leq \frac x{RM} \\ (q,a)=1}} \left( \psi(x;qr,a)-\Lambda(a)-\frac x{\phi(qr)}\right) \right| \leq \sum_{\substack{r\leq R \\ (r,a)=1}} \Bigg|\sum_{\substack{q\leq \frac x{RM} \\ (q,a)=1}} \left( \psi(x;qr,a)-\Lambda(a)-\frac x{\phi(qr)}\right)  \\
 &\hspace{3cm}-\sum_{\substack{r\leq R \\ (r,a)=1}}\frac{\phi(a)}a\frac{x}{RM}\mu(a,r,RM/r)\Bigg| 
+\sum_{\substack{r\leq R \\ (r,a)=1}} \left|\frac{\phi(a)}a\frac{x}{RM}\mu(a,r,M)\right|  \\
&\hspace{3cm}\leq \frac{\phi(a)}a\frac{x}{RM}\sum_{\substack{r\leq R \\ (r,a)=1}} |\mu(a,r,RM/r)| + O_{\epsilon}\left(\frac {x}{M^{\frac{743}{538}-\epsilon}} \right).
%
%
\end{align*}
 The result follows by the definition of $\mu(a,r,RM/r)$. Note that if $a=\pm 1$, then we have
\begin{align*}
 2\sum_{\substack{r\leq R \\ (r,a)=1}} |\mu(a,r,RM/r)| &= \sum_{\substack{r\leq R}} \left(\log (RM/r)+2C_5 + \sum_{p\mid r}\frac{\log p}p\right) 
 \\ &= (R+O(1))\left(\log M+1+2C_5 +O\left(\frac{\log R}R\right)\right) + \sum_{p\leq R} \frac{\log p}p \left \lfloor \frac Rp\right \rfloor,
\end{align*}
by Stirling's approximation. The last sum can be handled without much effort:
\begin{align*}\sum_{p\leq R} \frac{\log p}p \left \lfloor \frac Rp\right \rfloor &= R \sum_{p\leq R} \frac{\log p}{p^2}+ O\left(\sum_{p\leq R} \frac{\log p}p\right) \\
&= R\left(\sum_{p}\frac{\log p}{p^2}+O\left(\frac 1R \right)\right) + O\left(\log R \right). 
\end{align*}
Hence,
$$ \sum_{\substack{r\leq R \\ (r,a)=1}} |\mu(a,r,RM/r)| =R\left(\frac 12\log M+C_6\right)+O(\log (RM)). $$

\end{proof}

\begin{proof}[Proof of Proposition \ref{proposition appliquer titchmarsh a BFI}]
 Exchanging the order of summation as in the proof of Theorem \ref{B-V pour titchmarsh thm}, we get that
$$\sum_{\substack{\frac R2<r\leq R \\ (r,a)=1}} \left |\sum_{\substack{q\leq \frac xr \\ (q,a)=1}} \left( \psi(x;qr,a)-\Lambda(a)-\frac x{\phi(qr)} \right)- x\Bigg( \frac{C_1(a,r)}r\log r'+\frac{C_3(a,r)}r\Bigg) \right| \ll \frac x{(\log x)^A}.$$ 
As we have seen in the proof of Proposition \ref{proposition M entier}, we can 
give a good estimate for the part of the sum over $q$ where $\frac x{RL}<q\leq \frac x{r}$ by switching divisors and using the Bombieri-Vinogradov theorem (which explains the restriction on $R$). Doing so and applying Lemma \ref{somme inverse euler avec poids}, we get that
$$\sum_{\substack{\frac R2<r\leq R \\ (r,a)=1}} \left |\sum_{\substack{q\leq \frac x{RL} \\ (q,a)=1}} \left( \psi(x;qr,a)-\Lambda(a)-\frac x{\phi(qr)} \right) \right| \ll \frac {x}{L},$$ 
which concludes the proof.
\end{proof}

\end{document}